\documentclass[a4paper,11pt,leqno]{smfart}

\usepackage{amssymb,amsmath,enumerate,verbatim,mathrsfs,graphics,graphicx,mathptm,float,mathtools}
\usepackage[T1]{fontenc}
\usepackage[latin1]{inputenc}
\usepackage[french,english]{babel}
\usepackage[all]{xy}
\usepackage[pdfstartview=FitH,
            colorlinks=true,
            linkcolor=blue,
            urlcolor=blue,
            citecolor=red,
            bookmarks,
            bookmarksopen=true,
            bookmarksnumbered=true]{hyperref}

\usepackage{cleveref}
\setcounter{tocdepth}{2}

\theoremstyle{plain}

\theoremstyle{definition}

\theoremstyle{remark}

\theoremstyle{plain}
\newtheorem{thmsec}{Theorem}[section]
\newtheorem{thm}[thmsec]{Theorem}

\newtheorem{lem}[thmsec]{Lemma}
\newtheorem{cor}[thmsec]{Corollary}

\theoremstyle{definition}
\newtheorem{defin}[thmsec]{Definition}

\theoremstyle{remark}
\newtheorem{rem}[thmsec]{Remark}

\newtheorem{eg}[thmsec]{Example}

\def\og{\leavevmode\raise.3ex\hbox{$\scriptscriptstyle\langle\!\langle$~}}
\def\fg{\leavevmode\raise.3ex\hbox{~$\!\scriptscriptstyle\,\rangle\!\rangle$}}

\setlength{\textwidth}{16.7cm} \setlength{\textheight}{21cm}
\setlength{\topmargin}{0cm} \setlength{\headheight}{0.59cm}
\setlength{\headsep}{1.5cm} \setlength{\oddsidemargin}{-0.4cm}
\setlength{\evensidemargin}{-0.4cm} \marginparwidth 1.9cm
\marginparsep 0.4cm \marginparpush 0.4cm \footskip 2.2cm
\setlength{\baselineskip}{0.8cm}

\addtocounter{section}{0}             
\numberwithin{equation}{section}       


\newcommand{\N}{\mathbb{N}}

\newcommand{\R}{\mathbb{R}}
\newcommand{\C}{\mathbb{C}}
\newcommand{\sph}{\mathbb{P}^{1}_{\mathbb{C}}}
\newcommand{\pp}{\mathbb{P}^{2}_{\mathbb{C}}}
\newcommand{\pd}{\mathbb{\check{P}}^{2}_{\mathbb{C}}}

\newcommand\Sing{\mathrm{Sing}}

\newcommand\Leg{\mathrm{Leg}}

\newcommand\ItrH{\mathrm{I}_{\mathcal{H}}^{\hspace{0.2mm}\mathrm{tr}}}

\newcommand\radH{\Sigma_{\mathcal{H}}^{\mathrm{rad}}}
\newcommand\radHd{\check{\Sigma}_{\mathcal{H}}^{\mathrm{rad}}}
\newcommand\F{\mathcal{F}}
\newcommand\W{\mathcal{W}}
\newcommand\G{\mathcal{G}}

\newcommand\Gunderline{{\mspace{2mu}\underline{\mspace{-2mu}\mathcal{G}\mspace{-2mu}}\mspace{2mu}}}



\begin{document}
\title[Holomorphy of the curvature of smooth planar webs and homogeneous foliations]{A criterion for the holomorphy of the curvature of smooth planar webs and applications to dual webs of homogeneous foliations on $\mathbb{P}^{2}_{\mathbb{C}}$}

\date{\today}

\author{Samir \textsc{Bedrouni}}

\address{Facult\'e de Math\'ematiques, USTHB, BP $32$, El-Alia, $16111$ Bab-Ezzouar, Alger, Alg\'erie}
\email{sbedrouni@usthb.dz}

\author{David \textsc{Mar\'{\i}n}}

\thanks{This work has been partially funded by the Ministry of Science, Innovation and Universities of Spain through the grants PGC2018-095998-B-I00 and PID2021-125625NB-I00, by the Agency for Management of University and Research Grants of Catalonia through the grants 2017SGR1725 and 2021SGR01015 and by the Spanish State Research Agency, through the Severo Ochoa and María de Maeztu Program for Centers and Units of Excellence in R\&D (CEX2020-001084-M)}

\address{Departament de Matem\`{a}tiques, Edifici Cc, Universitat Aut\`{o}noma de Barcelona, 08193 Cerdanyola del Vall\`{e}s (Barcelona), Spain. Centre de Recerca Matem\`{a}tica, Edifici Cc, Campus de Bellaterra, 08193 Cerdanyola del Vall\`{e}s (Barcelona), Spain}

\email{david.marin@uab.cat}

\keywords{web, curvature, \textsc{Legendre} transform, \textsc{Galois} homogeneous foliation}
\selectlanguage{english}
\maketitle{}

\begin{abstract}
Let $d\geq3$ be an integer. For a holomorphic $d$-web $\mathcal{W}$ on a complex surface $M$, smooth along an irreducible component $D$ of~its~discriminant $\Delta(\mathcal{W}),$ we establish an effective criterion for the holomorphy of the curvature of $\mathcal{W}$ along $D,$ generalizing results on decomposable webs due to \textsc{Mar\'{\i}n}, \textsc{Pereira} and \textsc{Pirio}. As an application, we~deduce a complete characterization for the holomorphy of the curvature of the \textsc{Legendre} transform (dual web) $\mathrm{Leg}\mathcal{H}$ of a homogeneous foliation $\mathcal{H}$ of degree $d$ on $\mathbb{P}^{2}_{\mathbb{C}},$ generalizing some of our previous results. This~then allows us to study the flatness of the $d$-web $\mathrm{Leg}\mathcal{H}$ in the particular case where the foliation $\mathcal{H}$ is \textsc{Galois}. When the \textsc{Galois} group of $\mathcal{H}$ is cyclic, we show that $\mathrm{Leg}\mathcal{H}$ is flat if and only if $\mathcal{H}$ is given, up to linear conjugation, by one of the two 1-forms $\omega_1^{\hspace{0.2mm}d}=y^d\mathrm{d}x-x^d\mathrm{d}y$, $\omega_2^{\hspace{0.2mm}d}=x^d\mathrm{d}x-y^d\mathrm{d}y.$ When the \textsc{Galois} group of $\mathcal{H}$ is non-cyclic, we~obtain that $\mathrm{Leg}\mathcal{H}$ is always flat.

\noindent{\it 2010 Mathematics Subject Classification. --- 14C21, 32S65, 53A60.}
\end{abstract}

\bigskip

\section*{Introduction}

\noindent A (regular) $d$-web $\W$ on $(\mathbb{C}^2,0)$ is the data of a family $\{\F_1,\F_2,\ldots,\F_d\}$ of regular holomorphic foliations on~$(\mathbb{C}^2,0)$ which are pairwise transverse at the origin. We write  $\mathcal{W}=\mathcal{F}_{1}\boxtimes\cdots\boxtimes\mathcal{F}_{d}.$
\smallskip

\noindent A (global) $d$-web on a complex surface $M$ is given in a local chart $(x,y)$ by an implicit differential equation $F(x,y,y')=0$, where $F(x,y,p)=\sum_{i=0}^{d}a_{i}(x,y)p^{d-i}$ is a (reduced) polynomial in $p$ of degree~$d$, having analytic coefficients $a_i$, with $a_0$ not identically zero. In a neighborhood of every point $z_{0}=(x_{0},y_{0})$ such that $a_{0}(x_{0},y_{0})\Delta(x_{0},y_{0})\neq 0$, where $\Delta(x,y)$ is the $p$-discriminant of $F$, the integral curves of this equation define a regular $d$-web on $(\mathbb{C}^2,z_{0}).$
\smallskip

\noindent To every $d$-web $\W$ on $M$ with $d\geq 3$ we can associate a meromorphic $2$-form with poles along the discriminant $\Delta(\W)$, called the curvature of $\W$ and denoted by $K(\W)$, \emph{see}~\S\ref{subsec:courbure-platitude}. A web with zero curvature is called flat. When $M=\pp$ the flatness of a web $\W$ on $\pp$ is characterized by the holomorphy of its curvature $K(\W)$ along the generic points of $\Delta(\W)$.

\noindent In $2008$  \textsc{Pereira} and \textsc{Pirio} \cite[Theorem~7.1]{PP08} established a result on the holomorphy of the curvature of~a~completely~decomposable $d$-web $\mathcal{W}=\mathcal{F}_{1}\boxtimes\cdots\boxtimes\mathcal{F}_{d}.$ In $2013$ \textsc{Mar\'{\i}n} and \textsc{Pereira} \cite[Theorem~1]{MP13} extended~this~result to decomposable webs of the form $\W=\W_2\boxtimes\W_{d-2},$ {\it i.e.} which are the superposition of a $2$-web $\W_2$ and a $(d-2)$-web $\W_{d-2}$. In this paper we establish an effective criterion (Theorem~\ref{thm-critere-holomorphie-kw}) for the holomorphy of the curvature of a $d$-web $\W$ defined on a complex surface and smooth along an irreducible component of its discriminant $\Delta(\W),$ generalizing these two results (\emph{see} Corollary~\ref{cor:critere-holomorphie-kw} and Remark~\ref{rem:critere-du-barycentre}).
\smallskip

\noindent We are then interested in the foliations on $\pp$ which are {\sl homogeneous}, {\it i.e.} which are invariant by homotheties. In \cite[Section~3]{BM18Bull} we studied, for a homogeneous foliation $\mathcal H$ of degree $d\geq3$ on $\pp,$ the problem of the flatness of its \textsc{Legendre} transform (its dual web) $\Leg\mathcal{H}$; it is a $d$-web on the dual projective plane $\pd$ whose leaves are the tangent lines to the leaves of $\mathcal H,$ \emph{see}~\S\ref{sec:holomorphie-courbure-LegH}. Theorem~\ref{thm-critere-holomorphie-kw} allows us to establish, for such a foliation $\mathcal H,$ a complete characterization (Theorem~\ref{thm:holomorphie-courbure-homogene}) of the holomorphy of the curvature of the $d$-web $\mathrm{Leg}\mathcal{H}$ along an irreducible component of the discriminant $\Delta(\mathrm{Leg}\mathcal{H})$, generalizing our results in~\cite[Theorems~3.5~and~3.8]{BM18Bull} (\emph{see} Corollary~\ref{cor:holomorphie-droite-inflex-nu-1} and Remark~\ref{rem:holomorphie-droite-inflex-minimale-maximale}).
\smallskip

\noindent We finally focus on the particular case of a homogeneous foliation $\mathcal H$ of degree $d\geq 3$ on $\pp$ which is \textsc{Galois} in the sense of \cite[Definition~6.16]{BFMN16}, \emph{see}~\S\ref{sec:feuill-homog-Galois-plat}. When the \textsc{Galois} group of $\mathcal{H}$ is cyclic, we prove~that~$\mathrm{Leg}\mathcal{H}$~is~flat~if and only if, up to linear conjugation, $\mathcal{H}$ is given by one of the two $1$-forms $\omega_1^{\hspace{0.2mm}d}=y^d\mathrm{d}x-x^d\mathrm{d}y$, $\omega_2^{\hspace{0.2mm}d}=x^d\mathrm{d}x-y^d\mathrm{d}y.$ When the \textsc{Galois} group of  $\mathcal{H}$ is non-cyclic, we show that $\Leg\mathcal{H}$ is always~flat, \emph{see}~Theorem~\ref{thm:feuill-homog-Galois-plat}.


\section{Preliminaries}

\subsection{Webs}

Let $d\geq1$ be a integer. A {\sl (global) $d$-web} $\mathcal{W}$ on a complex surface $M$ is given by an open~covering $(U_{i})_{i\in I}$ of $M$ and a collection of $d$-symmetric $1$-forms $\omega_{i}\in \mathrm{Sym}^{d}\Omega^{1}_{M}(U_{i})$, with isolated zeros, satisfying:
\begin{itemize}
\item [($\mathfrak{a}$)] there exists $g_{ij}\in \mathcal{O}^{*}_{M}(U_{i}\cap U_{j})$ such that $\omega_i$ coincides with $g_{ij}\omega_j$ on $U_i\cap U_j$;
\item [($\mathfrak{b}$)] for every generic point $m$ of $U_i$, $\omega_i(m)$ factors as the product of $d$ pairwise linearly~independent~$1$-forms.
\end{itemize}

\noindent The {\sl discriminant} $\Delta(\mathcal{W})$ of $\mathcal{W}$ is the divisor on $M$ defined locally by $\Delta(\omega_i)=0$, where $\Delta(\omega_i)$ is the discriminant of the $d$-symmetric $1$-form~$\omega_{i}\in\mathrm{Sym}^{d}\Omega^{1}_{M}(U_{i})$, \emph{see} \cite[Chapter~1, \S 1.3.4]{PP15}. The support of $\Delta(\mathcal{W})$ consists of the points of~$M$ which do not satisfy condition ($\mathfrak{b}$). When~$d=1$ this condition is always satisfied and we recover the usual definition of a holomorphic foliation $\F$ on $M$.

\noindent The {\sl tangent locus} $\mathrm{T}_m\W$ of $\W$ at a point $m\in U_i\setminus\Delta(\W)$ is the union of the $d$ kernels at $m$ of the linear factors of $\omega_i(m).$

\noindent A global $d$-web $\W$ on $M$ is said to be {\sl decomposable} if there are global webs $\mathcal{W}_{1},\mathcal{W}_{2}$ on $M$ sharing no common subwebs such that $\W$ is the superposition of $\mathcal{W}_{1}$ and $\mathcal{W}_{2}$; we then write $\mathcal{W}=\mathcal{W}_{1}\boxtimes\mathcal{W}_{2}.$ Otherwise $\W$  is said to be {\sl irreducible}. We say that $\mathcal{W}$ is {\sl completely decomposable} if there exist global foliations $\mathcal{F}_{1},\ldots,\mathcal{F}_{d}$ on $M$ such that $\mathcal{W}=\mathcal{F}_{1}\boxtimes\cdots\boxtimes\mathcal{F}_{d}.$ For more details on this subject, we refer to~\cite{PP15}.

\subsection{Characteristic surface of a web}

Let $\W$ be a  holomorphic $d$-web on a complex surface $M.$ Let $\widetilde{M}=\mathbb{P}\mathrm{T}^{*}M$ be the projectivization of the cotangent bundle of $M$; the {\sl characteristic surface} of $\W$ is the surface $S_\W\subset\widetilde{M}$ defined by
\[
S_\W:=\overline{\Big\{(m,[\eta])\in \widetilde{M}\hspace{1mm}\big\vert\hspace{1mm} m\in M\setminus\Delta(\W),\hspace{1mm}\ker\eta\subset\mathrm{T}_{m}\W\Big\}}.
\]
We will give a local expression of this surface. First of all, let us consider a local coordinate system~$(x,y)$~on~an open subset $U$ of $M$. Denote by $\pi\hspace{1mm}\colon \widetilde{M}\to M$ the natural projection. We define a coordinate system on the open set  $\pi^{-1}(U),$ by denoting by $(x,y,[p:q])$ the coordinates of the point $(m,[q\mathrm{d}y-p\mathrm{d}x])\in\pi^{-1}(U),$ where $(x,y)$ are the local coordinates of $m$ in $U.$ If $\W$ is given on $U$ by the $d$-symmetric $1$-form $\omega=\sum_{i=0}^{d}a_i(x,y)(\mathrm{d}x)^{i}(\mathrm{d}y)^{d-i}$, with $a_i\in\mathcal{O}_{M}(U)$, then
$$S_\W\cap \pi^{-1}(U)=\{(x,y,[p:q])\in\widetilde{M}\hspace{1mm}\vert\hspace{1mm}\widetilde{F}(x,y,p,q)=0\},$$
where $\widetilde{F}(x,y,p,q)=\sum_{i=0}^{d}a_i(x,y)p^{d-i}q^{i}.$

\noindent In the sequel we will work in the affine chart $(U_q,(x,y,p))$ defined by $U_q:=\pi^{-1}(U)\setminus\{q=0\}$ and $p:=[p:1].$ Setting $F(x,y,p):=\widetilde{F}(x,y,p,1)=\sum_{i=0}^{d}a_i(x,y)p^{d-i},$ we have
\begin{align*}
S_\W\cap U_q=\{(x,y,p)\in\widetilde{M}\hspace{1mm}\vert\hspace{1mm}F(x,y,p)=0\}.
\end{align*}
We will denote by  $\pi_\W\hspace{1mm}\colon S_\W\to M$ the restriction of $\pi$ to $S_\W$. Let us introduce the following definition which will be useful later.

\begin{defin}
With the above notations, let $D$ be an irreducible component of the discriminant $\Delta(\W).$ We will say that $\W$ is \textsl{smooth along $D$} if for every generic point $m$ of $D,$ the characteristic surface $S_\W$ of~$\W$ is smooth at every point of the fiber $\pi_{\W}^{-1}(m).$
\end{defin}

\begin{eg}
On $M=\C^2,$ the $2$-web $\W$ given by $\omega=(y^2-x)\mathrm{d}y^2+2\,x\hspace{0.2mm}\mathrm{d}x\mathrm{d}y-x\hspace{0.2mm}\mathrm{d}x^2$ has discriminant $\Delta(\W)=4xy^2$ and its characteristic surface $S_\W$ has equation $F(x,y,p):=(y^2-x)p^2+2\,xp-x=0.$ Note~that~$\W$ is smooth along the irreducible component  $D_1:=\{x=0\}\subset\Delta(\W).$ Indeed, the fiber $\pi_{\W}^{-1}(m)$ over a generic point $m=(0,y)\in D_1$ is reduced to the point $\widetilde{m}=(0,y,0),$ and the surface $S_\W$ is smooth at $\widetilde{m},$ because  $\partial_xF\big(0,y,0\big)=-1\neq0.$ However,~$\W$~is not smooth along the irreducible component $D_2:=\{y=0\}\subset\Delta(\W),$ because, for every generic point $m=(x,0)\in D_2,$ we have $\pi_{\W}^{-1}(m)=\{(x,0,1)\}$\, and\, $\partial_xF\big(x,0,1\big)\equiv\partial_yF\big(x,0,1\big)\equiv\partial_pF\big(x,0,1\big)\equiv0.$
\end{eg}

\subsection{Fundamental form, curvature and flatness of a web}\label{subsec:courbure-platitude}

We recall here the definitions of the fundamental form and the curvature of a $d$-web $\W.$ Let us first suppose that $\W$ is a germ of completely decomposable $d$-web on $(\mathbb{C}^{2},0)$, $\mathcal{W}=\mathcal{F}_{1}\boxtimes\cdots\boxtimes\mathcal{F}_{d}.$ For each $1\leq i\leq d,$ let $\omega_i$ be a $1$-form with at most an isolated singularity at $0$ defining the foliation $\F_i$. According to \cite{PP08}, for every triple $(r,s,t)$ with $1\leq r<s<t\leq d,$ we~define $\eta_{rst}=\eta(\mathcal{F}_{r}\boxtimes\mathcal{F}_{s}\boxtimes\mathcal{F}_{t})$ as the unique meromorphic  $1$-form satisfying the following equalities:
\begin{equation}\label{equa:eta-rst}
{\left\{\begin{array}[c]{lll}
\mathrm{d}(\delta_{st}\,\omega_{r}) &=& \eta_{rst}\wedge\delta_{st}\,\omega_{r}\\
\mathrm{d}(\delta_{tr}\,\omega_{s}) &=& \eta_{rst}\wedge\delta_{tr}\,\omega_{s}\\
\mathrm{d}(\delta_{rs}\,\omega_{t}) &=& \eta_{rst}\wedge\delta_{rs}\,\omega_{t}
\end{array}
\right.}
\end{equation}
where $\delta_{ij}$ denotes the function defined by the relation $\omega_{i}\wedge\omega_{j}=\delta_{ij}\,\mathrm{d}x\wedge\mathrm{d}y.$ We call {\sl  fundamental form} of the web $\mathcal{W}=\mathcal{F}_{1}\boxtimes\cdots\boxtimes\mathcal{F}_{d}$ the $1$-form
\begin{equation}\label{equa:eta}
\hspace{7mm}\eta(\mathcal{W})=\eta(\mathcal{F}_{1}\boxtimes\cdots\boxtimes\mathcal{F}_{d})=\sum_{1\le r<s<t\le d}\eta_{rst}.
\end{equation}
We can easily verify that $\eta(\mathcal{W})$ is a meromorphic $1$-form with poles along the discriminant $\Delta(\mathcal{W})$ of $\mathcal{W},$ and~that it is well-defined up to addition of a closed logarithmic $1$-form $\dfrac{\mathrm{d}g}{g}$ with $g\in\mathcal{O}^*(\mathbb{C}^{2},0)$ (\emph{cf.} \cite{Rip05,BM18Bull}).
\smallskip

\noindent Now, if $\mathcal{W}$ is an arbitrary $d$-web on a complex surface $M$, then we can transform it into a completely decomposable $d$-web by taking its pull-back by a suitable ramified \textsc{Galois} covering. The invariance of the~fundamental form of this new web by the action of the \textsc{Galois} group allows us to descend it to a global meromorphic~$1$-form $\eta(\mathcal{W})$ on $M$, with poles along the discriminant of $\W$ (\emph{see} \cite{MP13}).
\smallskip

\noindent The {\sl curvature} of the web $\mathcal{W}$ is by definition the $2$-form
\begin{align*}
&K(\mathcal{W})=\mathrm{d}\,\eta(\mathcal{W}).
\end{align*}
\noindent It is a meromorphic $2$-form with poles along the  discriminant $\Delta(\mathcal{W}),$ canonically associated to $\mathcal{W}$; more precisely, for any dominant holomorphic map  $\varphi,$ we have $K(\varphi^{*}\mathcal{W})=\varphi^{*}K(\mathcal{W}).$
\smallskip

\noindent A $d$-web $\mathcal{W}$ is called {\sl flat} if its curvature  $K(\mathcal{W})$ vanishes identically.
\smallskip

\noindent Note that a $d$-web $\mathcal{W}$ on $\mathbb{P}^{2}_{\mathbb{C}}$ is flat if and only if its curvature is holomorphic over the generic points of the irreducible components of  $\Delta(\mathcal{W})$. This follows from the definition of $K(\mathcal{W})$ and the fact that there are no holomorphic $2$-forms on $\mathbb{P}^{2}_{\mathbb{C}}$ other than the zero $2$-form.


\section{Criterion for the holomorphy of the curvature of smooth webs}
\vspace{2mm}

\noindent In this section we propose to establish the following theorem.
\begin{thm}\label{thm-critere-holomorphie-kw}
{\sl Let $\W$ be a holomorphic $d$-web on a complex surface $M$ and let $D$ be an irreducible component~of the discriminant  $\Delta(\W).$ Assume that $\W$ is smooth along $D$. Then the fundamental form~$\eta(\W)$~has~simple poles along $D$. More precisely, choose a local coordinate system $(x,y)$ on $M$ such that $D=\{y=0\}$ and let $F(x,y,p)=0$, $p=\frac{\mathrm{d}y}{\mathrm{d}x},$ be an implicit differential equation defining $\W.$ Write  $F(x,0,p)=a_{0}(x)\prod\limits_{\alpha=1}^{n}(p-\varphi_{\alpha}(x))^{\nu_{\alpha}}$ with $\varphi_{\alpha}\not\equiv\varphi_{\beta}$ if $\alpha\neq\beta.$
Then the $1$-form
\[\eta(\W)-\frac{1}{6y}\sum\limits_{\alpha=1}^{n}\big(\nu_{\alpha}-1\big)\Big(\psi_{\alpha}(x)\big(\mathrm{d}y-\varphi_{\alpha}(x)\mathrm{d}x\big)
+\big(\nu_{\alpha}-2\big)\mathrm{d}y\Big)\]
is holomorphic along $D=\{y=0\}$, where~$\psi_{\alpha}$~is a function of the coordinate  $x$ defined,
for all $\alpha\in\{1,\ldots,n\}$ such that $\nu_{\alpha}\geq2,$ by
\begin{align*}
\psi_{\alpha}(x)=\frac{1}{\nu_{\alpha}}
\left[
(\nu_{\alpha}-2)\left(d-\varphi_{\alpha}(x)\dfrac{\partial_{p}\partial_{y}F\big(x,0,\varphi_{\alpha}(x)\big)}{\partial_{y}F\big(x,0,\varphi_{\alpha}(x)\big)}\right)
-2(\nu_{\alpha}+1)\hspace{-3.5mm}\sum\limits_{\hspace{3.5mm}\beta=1,\beta\neq\alpha}^{n}\frac{\nu_{\beta}\varphi_{\beta}(x)}{\varphi_{\alpha}(x)-\varphi_{\beta}(x)}
\right].
\end{align*}
In particular, the curvature $K(\W)$ is holomorphic along $D$ if and only if
\[
\sum_{\alpha=1}^{n}(\nu_{\alpha}-1)\varphi_{\alpha}(x)\psi_{\alpha}(x)\equiv0
\qquad\text{and}\qquad
\sum_{\alpha=1}^{n}(\nu_{\alpha}-1)\frac{\mathrm{d}}{\mathrm{d}x}\psi_{\alpha}(x)\equiv0.
\]
}
\end{thm}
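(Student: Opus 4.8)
The plan is to work in the coordinates $(x,y)$ with $D=\{y=0\}$ fixed by the statement, to localize around a generic point $m_0=(x_0,0)$ of $D$, and to compute the polar part of $\eta(\W)$ along $D$ by a residue calculation on the smooth characteristic surface $S_\W$. Over a generic $m=(x,0)\in D$ the fibre $\pi_\W^{-1}(m)$ consists of the points $\widetilde m_\alpha=(x,0,\varphi_\alpha(x))$, and $\varphi_\alpha(x)$ is a root of multiplicity $\nu_\alpha$ of $F(x,0,p)$, so $\partial_p^{\,j}F(x,0,\varphi_\alpha)=0$ for $j<\nu_\alpha$. Since $\W$ is smooth along $D$, $S_\W$ is smooth at each $\widetilde m_\alpha$, hence $(\partial_xF,\partial_yF)\neq0$ there; a short argument (if $\partial_yF=0$ while $\partial_xF\neq0$, writing $S_\W=\{x=g(y,p)\}$ forces its image over $\{y=0\}$ to collapse to $x=x_0$, contradicting dominance of $\pi_\W$) shows that at a generic point one has $\partial_yF(x,0,\varphi_\alpha(x))\neq0$, which is exactly what makes the denominators in $\psi_\alpha$ meaningful. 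Solving $F=0$ for $y$ near $\widetilde m_\alpha$ by \textsc{Weierstrass} preparation, I would obtain $y=a_\alpha(x)u^{\nu_\alpha}+b_\alpha(x)u^{\nu_\alpha+1}+\cdots$ with $u=p-\varphi_\alpha(x)$, where $a_\alpha=-\tfrac{1}{\nu_\alpha!}\,\partial_p^{\nu_\alpha}F/\partial_yF$ and $b_\alpha=-\tfrac{1}{\partial_yF}\bigl(a_\alpha\,\partial_p\partial_yF+\tfrac{1}{(\nu_\alpha+1)!}\partial_p^{\nu_\alpha+1}F\bigr)$, all partials taken at $\widetilde m_\alpha$. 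Inverting gives the \textsc{Puiseux} expansions of the $\nu_\alpha$ slopes of the cluster,
\[
p_{\alpha,k}(x,y)=\varphi_\alpha(x)+\lambda_\alpha(x)\,\zeta_\alpha^{\,k}\,y^{1/\nu_\alpha}+\mu_\alpha(x)\,\zeta_\alpha^{\,2k}\,y^{2/\nu_\alpha}+\cdots,\qquad\zeta_\alpha=e^{2\pi i/\nu_\alpha},
\]
$k=0,\dots,\nu_\alpha-1$, with $\lambda_\alpha,\mu_\alpha$ explicit in $a_\alpha,b_\alpha$, hence in the partials of $F$.

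Next I would pull $\W$ back by the ramified cyclic covering $\varphi\colon(x,t)\mapsto(x,y)=(x,t^{N})$ with $N=\mathrm{lcm}(\nu_1,\dots,\nu_n)$, making all the $p_{\alpha,k}$ single-valued and $\varphi^{*}\W$ completely decomposable; by $K(\varphi^{*}\W)=\varphi^{*}K(\W)$ and the \textsc{Galois}-invariant descent of the fundamental form recalled in \S\ref{subsec:courbure-platitude}, it suffices to compute $\eta(\varphi^{*}\W)$ and descend. Writing each leaf as $\omega_{\alpha,k}=\mathrm{d}y-p_{\alpha,k}\,\mathrm{d}x$, I would use $\eta(\W)=\sum_{r<s<t}\eta_{rst}$ together with the closed form of $\eta_{rst}=A\,\mathrm{d}x+B\,\mathrm{d}y$ issued from the linear system \eqref{equa:eta-rst}, in which $\delta_{ij}=p_j-p_i$ and each defining equation reads $A+Bp_i=\bigl(\partial_x\delta+\partial_y(\delta p_i)\bigr)/\delta$ for the appropriate separation $\delta$ of the other two slopes. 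I would then organize the $\binom{d}{3}$ triples according to the distribution of their indices among the clusters: triples with three indices in pairwise distinct clusters stay bounded as $y\to0$ (all separations are units), triples with exactly two indices in one cluster $\alpha$ encode the interaction of $\alpha$ with the remaining slopes, and triples with all three indices inside a single cluster $\alpha$ produce the intrinsic part. All poles come from intra-cluster differences $p_{\alpha,k}-p_{\alpha,l}=\lambda_\alpha(\zeta_\alpha^{\,k}-\zeta_\alpha^{\,l})\,y^{1/\nu_\alpha}+\cdots$, entering through the logarithmic derivatives $\partial_x\log\delta$, $\partial_y\log\delta$.

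The heart of the argument is to sum these fractional contributions over each cyclic cluster and to show that, after descent, the powers $y^{j/\nu_\alpha}$ with $\nu_\alpha\nmid j$ cancel, leaving exactly a simple pole in $y$. Root-of-unity sums of the type $\sum_{k\neq l}(\zeta_\alpha^{\,k}-\zeta_\alpha^{\,l})^{-1}$ and $\sum_k\zeta_\alpha^{\,k}(\cdots)$ collapse the fractional powers and generate the prefactors $\nu_\alpha-1$, $\nu_\alpha-2$ and the constant governing the self-interaction; the inter-cluster triples contribute, through $\sum_{\beta\neq\alpha}\tfrac{\nu_\beta}{\varphi_\alpha-\varphi_\beta}$ acting on $\partial_y\log\delta$, the transverse term $\sum_{\beta\neq\alpha}\tfrac{\nu_\beta\varphi_\beta}{\varphi_\alpha-\varphi_\beta}$, while $\sum_\beta\nu_\beta=d$ and the identity $\tfrac{\partial_pF}{F}(x,0,p)=\sum_\beta\tfrac{\nu_\beta}{p-\varphi_\beta}$ introduce the degree $d$ and let the $\partial_p^{\nu_\alpha}F,\partial_p^{\nu_\alpha+1}F$ contributions cancel, leaving only the ratio $\partial_p\partial_yF/\partial_yF$ of $\psi_\alpha$. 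Collecting the surviving residue yields
\[
\eta(\W)=\frac{1}{6y}\sum_{\alpha=1}^{n}(\nu_\alpha-1)\Bigl(\psi_\alpha(x)\bigl(\mathrm{d}y-\varphi_\alpha(x)\mathrm{d}x\bigr)+(\nu_\alpha-2)\mathrm{d}y\Bigr)+(\text{holomorphic along }D),
\]
which proves both the simple-pole statement and the explicit formula. I expect this reassembly --- establishing the vanishing of every non-integral polar power under the descent and pinning down the universal constant $1/6$ and the precise shape of $\psi_\alpha$ --- to be the main obstacle, since it is where smoothness, the cyclic combinatorics and the second-order coefficients $b_\alpha$ must all conspire.

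Finally, for the curvature I would differentiate the explicit polar $1$-form. As $\eta(\W)$ differs from $\Theta:=\tfrac{1}{6y}\sum_\alpha(\nu_\alpha-1)\bigl(\psi_\alpha(\mathrm{d}y-\varphi_\alpha\mathrm{d}x)+(\nu_\alpha-2)\mathrm{d}y\bigr)$ by a form holomorphic along $D$, $K(\W)=\mathrm{d}\,\eta(\W)$ is holomorphic along $D$ if and only if $\mathrm{d}\Theta$ is. Since $\varphi_\alpha,\psi_\alpha,\nu_\alpha$ depend only on $x$, one has $\Theta=\tfrac{H(x)}{6y}\mathrm{d}x+\tfrac{G(x)}{6y}\mathrm{d}y$ with $H=-\sum_\alpha(\nu_\alpha-1)\varphi_\alpha\psi_\alpha$ and $G=\sum_\alpha(\nu_\alpha-1)\psi_\alpha+\sum_\alpha(\nu_\alpha-1)(\nu_\alpha-2)$, whence a one-line computation gives $\mathrm{d}\Theta=\bigl(\tfrac{H}{6y^2}+\tfrac{G'(x)}{6y}\bigr)\mathrm{d}x\wedge\mathrm{d}y$. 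Its holomorphy along $D$ is equivalent to $H\equiv0$ and $G'\equiv0$; the constant $\sum_\alpha(\nu_\alpha-1)(\nu_\alpha-2)$ drops out under $\mathrm{d}/\mathrm{d}x$, so these reduce precisely to the two stated conditions $\sum_\alpha(\nu_\alpha-1)\varphi_\alpha\psi_\alpha\equiv0$ and $\sum_\alpha(\nu_\alpha-1)\tfrac{\mathrm{d}}{\mathrm{d}x}\psi_\alpha\equiv0$.
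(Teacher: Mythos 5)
Your plan is essentially the paper's own proof: it uses the same cluster decomposition of the $d$-web along $D$, derives the \textsc{Puiseux} expansions of the slopes from the smoothness hypothesis (via $\partial_yF\big(x,0,\varphi_\alpha(x)\big)\not\equiv 0$, with your coefficients $a_\alpha,b_\alpha$ agreeing with the paper's $Y_{\alpha,0},Y_{\alpha,1}$), classifies the triples $\eta_{rst}$ by how their indices distribute among clusters, collapses the fractional powers by root-of-unity summation, and concludes with the same elementary differentiation of the polar part. The only portion you defer --- the explicit extraction of the $1/y$-coefficient yielding the constant $1/6$ and the precise form of $\psi_\alpha$ --- is exactly the computation the paper performs via its closed formula for the fundamental form of a $3$-web (Lemma~\ref{lem:eta-3-tissu}), and your setup feeds into it correctly.
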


\begin{rem}
When the component  $D\subset\Delta(\W)$ is totally invariant by $\W$, the curvature  $K(\W)$ is always holomorphic along $D.$
\end{rem}

\begin{rem}
Assume that  $\nu_{\alpha}=\nu\geq2$ for all $\alpha\in\{1,\ldots,n\}.$ The following assertions hold:

\noindent\textbf{\textit{1.}} If $\nu=2$ (which implies that $d$ is even), then the curvature  $K(\W)$ is always holomorphic along $D.$

\noindent\textbf{\textit{2.}} If $\nu\geq 3$, then the curvature $K(\W)$ is holomorphic along $D$ if and only if
\[
\sum_{\alpha=1}^{n}\varphi_{\alpha}(x)\left(d-\rho_{\alpha}(x)\right)\equiv0
\qquad\text{and}\qquad
\sum_{\alpha=1}^{n}\frac{\mathrm{d}}{\mathrm{d}x}\rho_{\alpha}(x)\equiv0,
\]
where $\rho_{\alpha}(x):=\varphi_{\alpha}(x)\dfrac{\partial_{p}\partial_{y}F\big(x,0,\varphi_{\alpha}(x)\big)}{\partial_{y}F\big(x,0,\varphi_{\alpha}(x)\big)}.$

\noindent Indeed, it suffices to set  $f_{\alpha,\beta}=\frac{\varphi_{\beta}}{\varphi_{\alpha}-\varphi_{\beta}}$ and to note that
\begin{align*}
\hspace{-1.4cm}\sum_{\alpha=1}^{n}
\hspace{-3.5mm}\sum\limits_{\hspace{3.5mm}\beta=1,\beta\neq\alpha}^{n}\varphi_{\alpha}f_{\alpha,\beta}
=\sum_{1\leq\alpha<\beta\leq n}\left(\varphi_{\alpha}f_{\alpha,\beta}+\varphi_{\beta}f_{\beta,\alpha}\right)\equiv0
\end{align*}
and
\begin{align*}
\sum_{\alpha=1}^{n}
\hspace{-3.5mm}\sum\limits_{\hspace{3.5mm}\beta=1,\beta\neq\alpha}^{n}f_{\alpha,\beta}
=\sum_{1\leq\alpha<\beta\leq n}\left(f_{\alpha,\beta}+f_{\beta,\alpha}\right)
\equiv-\binom{{n}}{{2}}\equiv\mathrm{constant}.
\end{align*}
\end{rem}
\smallskip

\noindent The hypothesis of smoothness of $\W$ along the component $D\subset\Delta(\W)$ is essential for the validity of Theorem~\ref{thm-critere-holomorphie-kw}, as the following example shows.
\begin{eg}\label{eg:contre-exemple-Henaut}
Let $M$ be a complex surface and let $\W$ be the $3$-web defined
in local coordinates $(x,y)$ by the differential equation
\begin{align*}
F(x,y,p):=\left(\lambda(x^2-1)p+(x-3)y^\kappa\right)\left(\lambda(x^2-1)p+(x+3)y^\kappa\right)\left(\lambda(x^2-1)p-2xy^\kappa\right)=0,
\end{align*}
where $p=\frac{\mathrm{d}y}{\mathrm{d}x},\hspace{1mm}\kappa\in\N\setminus\{0,1\},\hspace{1mm}\lambda\in\C^*.$ For this web we have
\begin{align*}
&\Delta(\W)=2916\lambda^6(x^2-1)^8y^{6\kappa}
\qquad\text{and}\qquad
\eta(\W)=\frac{7\mathrm{d}(x^2-1)}{3(x^2-1)}+\left(\frac{2\kappa}{y}-\frac{\lambda}{3y^\kappa}\right)\mathrm{d}y.
\end{align*}
We see that $\eta(\W)$ is closed and therefore that $\W$ is flat. Moreover $\eta(\W)$ has poles of order $\kappa>1$ along the component $D:=\{y=0\}\subset\Delta(\W).$ Note that $\W$ is not smooth along $D.$ Indeed, the fiber $\pi_{\W}^{-1}(m)$ over a generic point $m=(x,0)\in D$ consists of the single point $\widetilde{m}=(x,0,0)$ and the surface $S_\W$ is not smooth at $\widetilde{m}$, because $\partial_xF\big(x,0,0\big)\equiv\partial_yF\big(x,0,0\big)\equiv\partial_pF\big(x,0,0\big)\equiv0.$
\end{eg}

\begin{rem}
In~\cite[page~286]{Hen06} the author claimed that the fundamental form of a planar $3$-web~$\W$ has probably at most simple poles along $\Delta(\W)$ and he gave an argument in the particular case where $\W$ is defined by a differential equation of type $a_0(x,y)p^3+a_2(x,y)p+a_3(x,y)=0$, $p=\frac{\mathrm{d}y}{\mathrm{d}x}.$ Example~\ref{eg:contre-exemple-Henaut} is of this type and contradicts the claim and the argument of~\cite[page~286]{Hen06}.
\end{rem}

\begin{cor}\label{cor:critere-holomorphie-kw}
{\sl Let $\W$ be a holomorphic $d$-web on a complex surface $M$ and let $D$ be an irreducible component of the discriminant $\Delta(\W).$ Assume that $\W$ is smooth along $D.$ Fix a local coordinate system $(x,y)$ on $M$ such that $D=\{y=0\}$ and let $F(x,y,p)=0,$ $p=\frac{\mathrm{d}y}{\mathrm{d}x},$ be an implicit differential equation defining~$\W$. Assume moreover that $F(x,0,p)=a_{0}(x)(p-\varphi_{0}(x))^\nu\prod\limits_{\alpha=1}^{d-\nu}(p-\varphi_{\alpha}(x))$ with $\varphi_{\alpha}\neq\varphi_{0}$ for all $\alpha\in\{1,\ldots,d-\nu\}$ and $\varphi_{\alpha}\not\equiv\varphi_{\beta}$ if $\alpha\neq\beta.$ Then the curvature  $K(\W)$ is holomorphic on $D$ if and only if $\varphi_{0}\equiv0$ or $\psi\equiv0,$ where
\begin{align*}
\psi(x)=(\nu-2)\left(d-\varphi_{0}(x)\dfrac{\partial_{p}\partial_{y}F\big(x,0,\varphi_{0}(x)\big)}{\partial_{y}F\big(x,0,\varphi_{0}(x)\big)}\right)
-2(\nu+1)\sum\limits_{\alpha=1}^{d-\nu}\frac{\varphi_{\alpha}(x)}{\varphi_{0}(x)-\varphi_{\alpha}(x)}.
\end{align*}
}
\end{cor}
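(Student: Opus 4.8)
The plan is to obtain the corollary as a direct specialization of Theorem~\ref{thm-critere-holomorphie-kw} to the factorization in which one root $\varphi_0$ carries multiplicity $\nu\geq2$ while the remaining roots $\varphi_1,\ldots,\varphi_{d-\nu}$ are all simple. In the theorem's notation this means taking $n=1+(d-\nu)$ distinct roots with multiplicities $\nu_0=\nu$ and $\nu_\alpha=1$ for $1\leq\alpha\leq d-\nu$. The first observation is that in each of the two holomorphy conditions $\sum_\alpha(\nu_\alpha-1)\varphi_\alpha\psi_\alpha\equiv0$ and $\sum_\alpha(\nu_\alpha-1)\frac{\mathrm{d}}{\mathrm{d}x}\psi_\alpha\equiv0$, every contribution of a simple root vanishes because its weight $\nu_\alpha-1$ is zero; only the term indexed by the multiple root $\varphi_0$ survives.

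It remains to relate the theorem's function $\psi_0$ to the function $\psi$ of the statement. Since $\nu_\beta=1$ for all $\beta\neq0$, the sum $\sum_{\beta\neq0}\frac{\nu_\beta\varphi_\beta}{\varphi_0-\varphi_\beta}$ appearing in $\psi_0$ reduces to $\sum_{\alpha=1}^{d-\nu}\frac{\varphi_\alpha}{\varphi_0-\varphi_\alpha}$, so that $\psi_0=\frac1\nu\psi$. As $D$ is an actual component of $\Delta(\W)$ we have $\nu\geq2$, hence $\nu-1\neq0$ and $\frac1\nu\neq0$, and the two conditions collapse to
\[
\varphi_0(x)\,\psi(x)\equiv0\qquad\text{and}\qquad\frac{\mathrm{d}}{\mathrm{d}x}\psi(x)\equiv0.
\]
The forward implication is then immediate: the ring of germs of holomorphic functions being an integral domain, $\varphi_0\psi\equiv0$ forces $\varphi_0\equiv0$ or $\psi\equiv0$.

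For the converse I must verify that each of these two alternatives indeed realizes \emph{both} conditions. When $\psi\equiv0$ this is trivial. The one point demanding an actual computation --- and thus the heart of the argument --- is the case $\varphi_0\equiv0$: the first condition then holds automatically, and I would confirm the second by checking that $\psi$ degenerates to a constant. Setting $\varphi_0\equiv0$ kills the term $\varphi_0\frac{\partial_p\partial_y F}{\partial_y F}$ (the quotient remaining finite by the smoothness hypothesis, which forces $\partial_y F(x,0,\varphi_0(x))\neq0$), while each summand $\frac{\varphi_\alpha}{\varphi_0-\varphi_\alpha}$ collapses to $-1$ since $\varphi_\alpha\not\equiv\varphi_0=0$. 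Hence $\psi\equiv(\nu-2)d+2(\nu+1)(d-\nu)$ is constant, so $\frac{\mathrm{d}}{\mathrm{d}x}\psi\equiv0$ is automatic. This establishes the equivalence: $K(\W)$ is holomorphic along $D$ if and only if $\varphi_0\equiv0$ or $\psi\equiv0$.
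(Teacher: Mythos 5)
Your proof is correct and follows the route the paper intends: the corollary is stated without a separate proof precisely because it is the direct specialization of Theorem~\ref{thm-critere-holomorphie-kw} to the case $\nu_0=\nu$, $\nu_\alpha=1$ ($\alpha\geq1$), where only the $\varphi_0$-term survives in both holomorphy conditions and $\psi_0=\psi/\nu$. Your extra verification that $\varphi_0\equiv0$ forces $\psi$ to be the constant $(\nu-2)d+2(\nu+1)(d-\nu)$, so that the second condition is not lost in the equivalence, is exactly the point that needs checking.
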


\begin{rem}\label{rem:critere-du-barycentre}
In a neighborhood of every generic point of $D$, the $d$-web $\W$ decomposes as $\W=\W_{\nu}\boxtimes\W_{d-\nu}$ with
\begin{align*}
& \W_{\nu}\Big|_{D}:\mathrm{d}y-\varphi_{0}(x)\mathrm{d}x=0
\qquad\text{and}\qquad
\W_{d-\nu}\Big|_{D}:\prod_{\alpha=1}^{d-\nu}\left(\mathrm{d}y-\varphi_{\alpha}(x)\mathrm{d}x\right)=0.
\end{align*}
When $\nu=2$, we recover the barycenter criterion, namely Theorem~1~of~\cite{MP13} (\emph{see} also \cite[Theorem~7.1]{PP08}): the~curvature~of~$\W=\W_{2}\boxtimes\W_{d-2}$ is holomorphic on $D$ if and only if $D$ is invariant by $\W_2$ or by the barycenter $\beta_{\W_{2}}(\W_{d-2})$ of $\W_{d-2}$ with respect to $\W_{2}.$ Indeed, on the one hand, the invariance of $D=\{y=0\}$ by $\W_2$ translates into $\varphi_{0}\equiv0.$ On the other hand, the restriction of  $\beta_{\W_{2}}(\W_{d-2})$ to $D$ is given by
\[\mathrm{d}y-\Bigg[\varphi_{0}(x)+\frac{1}{\frac{1}{d-2}\sum\limits_{\alpha=1}^{d-2}\frac{1}{\varphi_{\alpha}(x)-\varphi_{0}(x)}}\Bigg]\mathrm{d}x,\]
or equivalently, by
\begin{small}
\begin{align*}
\sum_{\alpha=1}^{d-2}\frac{1}{\varphi_{0}(x)-\varphi_{\alpha}(x)}\mathrm{d}y+\Bigg[d-2-\varphi_{0}(x)\sum_{\alpha=1}^{d-2}\frac{1}{\varphi_{0}(x)-\varphi_{\alpha}(x)}\Bigg]\mathrm{d}x
=\sum_{\alpha=1}^{d-2}\frac{1}{\varphi_{0}(x)-\varphi_{\alpha}(x)}\mathrm{d}y-\sum_{\alpha=1}^{d-2}\frac{\varphi_{\alpha}(x)}{\varphi_{0}(x)-\varphi_{\alpha}(x)}\mathrm{d}x,
\end{align*}
\end{small}
\hspace{-1mm}so that the invariance of $D$ by $\beta_{\W_{2}}(\W_{d-2})$ is characterized by $\sum_{\alpha=1}^{d-2}\frac{\varphi_{\alpha}}{\varphi_{0}-\varphi_{\alpha}}\equiv0$ and therefore by $\psi\equiv0,$ because $\psi=-6\sum_{\alpha=1}^{d-2}\frac{\varphi_{\alpha}}{\varphi_{0}-\varphi_{\alpha}}.$
\end{rem}

\noindent The proof of Theorem~\ref{thm-critere-holomorphie-kw} uses the following lemma.
\begin{lem}\label{lem:eta-3-tissu}
{\sl The fundamental form of the $3$-web  $\W$ defined by the $1$-forms $\omega_\ell=\mathrm{d}y-\lambda_\ell(x,y)\mathrm{d}x$, $\ell=1,2,3$, is given by
\begin{align*}
&\eta(\W)=\sum_{(i,j,k)\in\langle 1,2,3\rangle}\frac{\partial_y(\lambda_i\lambda_j)-\partial_x\lambda_k}{(\lambda_i-\lambda_k)(\lambda_j-\lambda_k)}(\mathrm{d}y-\lambda_k \mathrm{d}x),
\end{align*}
\text{where} $\langle 1,2,3\rangle:=\{(1,2,3),(3,1,2),(2,3,1)\}.$
}
\end{lem}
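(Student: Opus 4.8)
The plan is to exploit that for a $3$-web there is only the single triple $(r,s,t)=(1,2,3)$, so that $\eta(\W)=\eta_{123}$ is \emph{characterized} by the three equations in \eqref{equa:eta-rst} together with the uniqueness recalled there; hence it suffices to exhibit a $1$-form solving them and to check that the stated candidate does. First I would compute the auxiliary functions $\delta_{ij}$: expanding $\omega_i\wedge\omega_j=(\mathrm dy-\lambda_i\mathrm dx)\wedge(\mathrm dy-\lambda_j\mathrm dx)$ gives $\delta_{ij}=\lambda_j-\lambda_i$, so that $\delta_{23}=\lambda_3-\lambda_2$, $\delta_{31}=\lambda_1-\lambda_3$, $\delta_{12}=\lambda_2-\lambda_1$. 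A useful preliminary remark is the relation $\delta_{23}\,\omega_1+\delta_{31}\,\omega_2+\delta_{12}\,\omega_3\equiv0$ (both the $\mathrm dx$- and the $\mathrm dy$-coefficient telescope to $0$); summing the three defining equations then shows they are compatible and that only two are independent, which is exactly what makes the a priori overdetermined system solvable.

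Next I would reduce each wedge equation to a single scalar equation. Writing $\eta=A\,\mathrm dx+B\,\mathrm dy$ and $\theta_k=\delta_{(k)}\omega_k=-\delta_{(k)}\lambda_k\,\mathrm dx+\delta_{(k)}\,\mathrm dy$ (with $\delta_{(1)}=\delta_{23}$, $\delta_{(2)}=\delta_{31}$, $\delta_{(3)}=\delta_{12}$), the elementary identities $\mathrm d(P\,\mathrm dx+Q\,\mathrm dy)=(\partial_xQ-\partial_yP)\,\mathrm dx\wedge\mathrm dy$ and $\eta\wedge\theta_k=(AQ_k-BP_k)\,\mathrm dx\wedge\mathrm dy$ turn $\mathrm d\theta_k=\eta\wedge\theta_k$ into
\[
A+\lambda_k B=\frac{\partial_x\delta_{(k)}+\partial_y\big(\delta_{(k)}\lambda_k\big)}{\delta_{(k)}},\qquad k=1,2,3.
\]

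Then I would verify that the candidate $\eta=\sum_{k}c_k\omega_k$, where $c_k$ is the coefficient of $\omega_k=\mathrm dy-\lambda_k\mathrm dx$ in the claimed formula, solves these. Since $\eta=\sum_m c_m(\mathrm dy-\lambda_m\mathrm dx)$ one gets $A+\lambda_k B=\sum_{m\neq k}c_m(\lambda_k-\lambda_m)$. For $k=1$ the two surviving terms are $c_2(\lambda_1-\lambda_2)=\frac{\partial_y(\lambda_3\lambda_1)-\partial_x\lambda_2}{\lambda_3-\lambda_2}$ and $c_3(\lambda_1-\lambda_3)=-\frac{\partial_y(\lambda_1\lambda_2)-\partial_x\lambda_3}{\lambda_3-\lambda_2}$; subtracting and using $\lambda_3\lambda_1-\lambda_1\lambda_2=\lambda_1(\lambda_3-\lambda_2)$ together with $\partial_x\lambda_3-\partial_x\lambda_2=\partial_x(\lambda_3-\lambda_2)$ collapses the sum to $\frac{\partial_x\delta_{23}+\partial_y(\delta_{23}\lambda_1)}{\delta_{23}}$, which is precisely the right-hand side above for $k=1$. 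The cases $k=2$ and $k=3$ then follow verbatim from the cyclic symmetry encoded in $\langle1,2,3\rangle$, and by the uniqueness of $\eta_{123}$ the candidate equals $\eta(\W)$.

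The argument is essentially routine once the wedge equations are rewritten in the scalar form above; the only point requiring genuine care is the algebraic collapse in the last step, where the quadratic denominators $(\lambda_i-\lambda_k)(\lambda_j-\lambda_k)$ must cancel against the linear factors $(\lambda_k-\lambda_m)$ so as to leave the single common denominator $\delta_{(k)}$. Organizing the verification cyclically rather than checking the three equations independently is what keeps this bookkeeping under control.
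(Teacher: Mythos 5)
Your proposal is correct and takes the same route as the paper, whose entire proof is the remark that the lemma ``follows from a straightforward computation using formula~(\ref{equa:eta-rst})''; you have simply written out that computation, reducing each wedge equation to the scalar relation $A+\lambda_k B=\big(\partial_x\delta_{(k)}+\partial_y(\delta_{(k)}\lambda_k)\big)/\delta_{(k)}$ and checking the candidate against it. The verification for $k=1$, the appeal to cyclic symmetry for $k=2,3$, and the use of the uniqueness of $\eta_{123}$ are all sound.
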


\begin{proof}
This follows from a straightforward computation using formula~(\ref{equa:eta-rst}).
\end{proof}

\begin{proof}[\sl Proof of Theorem~\ref{thm-critere-holomorphie-kw}]
In a neighborhood of every generic point $m$ of $D,$ the web $\W$ decomposes as $\W=\boxtimes_{\alpha=1}^{n}\W_{\alpha},$ where~$\W_{\alpha}$~is a $\nu_{\alpha}$-web having a unique slope $p=\varphi_{\alpha}(x)$ along $y=0,$ {\it i.e.} $\W_{\alpha}=\boxtimes_{i=1}^{\nu_{\alpha}}\F_{i}^{\alpha}$ and $\F_{i}^{\alpha}|_{y=0}:\mathrm{d}y-\varphi_{\alpha}(x)\mathrm{d}x=0.$ Then $\eta(\W)=\eta_1+\eta_2+\eta_3,$ where
\begin{align*}
&
\eta_1
=\sum_{\raisebox{2mm}{$\underset{\nu_{\alpha}\geq3}{\underset{\alpha=1}{}}$}}^{n}\sum_{1\le i<j<k\le\nu_{\alpha}}\eta_{ijk}^{\alpha\alpha\alpha},
&&
\eta_2=\sum_{\raisebox{2mm}{$\underset{\nu_{\alpha}\geq2}{\underset{\alpha=1}{}}$}}^{n}
\sum_{1\le i<j\le\nu_{\alpha}}
\sum_{\raisebox{2mm}{$\underset{\beta\neq\alpha}{\underset{\beta=1}{}}$}}^{n}
\sum_{k=1}^{\nu_{\beta}}\eta_{ijk}^{\alpha\alpha\beta},
&&
\eta_3=\sum_{1\le\alpha<\beta<\gamma\le n}
\sum_{\raisebox{2mm}{$\underset{1\le k\le\nu_{\gamma}}{\underset{1\le j\le\nu_{\beta}}{\underset{1\le i\le\nu_{\alpha}}{}}}$}}\eta_{ijk}^{\alpha\beta\gamma},
\end{align*}
and $\eta_{ijk}^{\alpha\alpha\alpha}$, resp. $\eta_{ijk}^{\alpha\alpha\beta},$ resp. $\eta_{ijk}^{\alpha\beta\gamma},$ is the fundamental form of the $3$-subweb $\F_{i}^{\alpha}\boxtimes\F_{j}^{\alpha}\boxtimes\F_{k}^{\alpha},$ resp. $\F_{i}^{\alpha}\boxtimes\F_{j}^{\alpha}\boxtimes\F_{k}^{\beta},$  resp. $\F_{i}^{\alpha}\boxtimes\F_{j}^{\beta}\boxtimes\F_{k}^{\gamma}$, of $\W.$

\noindent If $\alpha<\beta<\gamma$ then $(\varphi_{\alpha}-\varphi_{\beta})(\varphi_{\beta}-\varphi_{\gamma})(\varphi_{\gamma}-\varphi_{\alpha})\not\equiv0$, which implies, thanks to Lemma~\ref{lem:eta-3-tissu}, that the $1$-form $\eta_{ijk}^{\alpha\beta\gamma}$ has no poles along $y=0$; therefore the same is true for the $1$-form $\eta_3.$

\noindent As for $\eta_1$ and $\eta_2,$ let us first fix $\alpha\in\{1,\ldots,n\}$ such that $\nu_{\alpha}\geq2.$ Then $\partial_xF\big(x,0,\varphi_{\alpha}(x)\big)\equiv\partial_pF\big(x,0,\varphi_{\alpha}(x)\big)\equiv0$; the hypothesis of smoothness of $\W$ along $D=\{y=0\}$ implies that $\partial_yF\big(x,0,\varphi_{\alpha}(x)\big)\not\equiv0.$ Put
$z=p-\varphi_{\alpha}(x)$ and \begin{small}$F_{\alpha}(x,y,z):=F(x,y,z+\varphi_{\alpha}(x))=\sum_{k\geq0}F_{\alpha,k}(x,z)y^k$\end{small} with \begin{small}$F_{\alpha,k}\in\C\{x\}[z]$\end{small}. Since \begin{small}$F_{\alpha,1}(x,0)=\partial_yF\big(x,0,\varphi_{\alpha}(x)\big)\not\equiv0$\end{small}, the series  \begin{small}$\Psi(y):=\sum_{k\geq1}F_{\alpha,k}y^k$\end{small} is invertible and its inverse writes as  $\Psi^{-1}(w)=\frac{1}{F_{\alpha,1}}w-\frac{F_{\alpha,2}}{(F_{\alpha,1})^3}w^2+\cdots.$ Moreover, define $U_{\alpha}\in\C\{x\}[z]$ by $F_{\alpha}(x,0,z)=z^{\nu_{\alpha}}U_{\alpha}(x,z)$; note that
\begin{align*}
U_{\alpha}(x,z)=a_{0}(x)\hspace{-3.5mm}\prod\limits_{\hspace{3.5mm}\beta=1,\beta\neq\alpha}^{n}\left(z+\varphi_{\alpha}(x)-\varphi_{\beta}(x)\right)^{\nu_{\beta}}
=\sum_{k=0}^{d-\nu_{\alpha}}U_{\alpha,k}(x)z^k,
\end{align*}
with $\frac{U_{\alpha,1}(x)}{U_{\alpha,0}(x)}=\frac{\partial_zU_{\alpha}(x,0)}{U_{\alpha}(x,0)}
=\hspace{-3.5mm}\sum\limits_{\hspace{3.5mm}\beta=1,\beta\neq\alpha}^{n}\frac{\nu_{\beta}}{\varphi_{\alpha}(x)-\varphi_{\beta}(x)}.$ Writing  $F_{\alpha,1}(x,z)=\sum_{k=0}^{d}G_{\alpha,k}(x)z^k,$ with $G_{\alpha,0}\not\equiv0,$ it follows that in a neighborhood of $(x,0,0),$ the equation $F_{\alpha}(x,y,z)=0$ is equivalent to
\begin{small}
\begin{align*}
y=(\Psi^{-1}(-F_{\alpha,0}))(x,z)
=-z^{\nu_{\alpha}}\frac{U_{\alpha}(x,z)}{F_{\alpha,1}(x,z)}-z^{2\nu_{\alpha}}\frac{F_{\alpha,2}(x,z)(U_{\alpha}(x,z))^2}{(F_{\alpha,1}(x,z))^3}+\cdots
=Y_{\alpha,0}(x)z^{\nu_{\alpha}}+Y_{\alpha,1}(x)z^{\nu_{\alpha}+1}+\cdots=:Y_{\alpha}(x,z),
\end{align*}
\end{small}
\hspace{-1mm}with $Y_{\alpha,0}=-\frac{U_{\alpha,0}}{G_{\alpha,0}}\not\equiv0$ and $Y_{\alpha,1}=\frac{G_{\alpha,1}U_{\alpha,0}-G_{\alpha,0}U_{\alpha,1}}{(G_{\alpha,0})^{2}}$ because $\nu_{\alpha}\geq2.$ Thus, we can write $Y_{\alpha}(x,z)=\left(X_{\alpha}(x,z)\right)^{\nu_{\alpha}}$ with $X_{\alpha}(x,z)=\sum_{k\geq1}X_{\alpha,k}(x)z^k,$ $X_{\alpha,1}=\left(Y_{\alpha,0}\right)^{\frac{1}{\nu_{\alpha}}}\not\equiv0$ and $\frac{X_{\alpha,2}}{X_{\alpha,1}}=\frac{Y_{\alpha,1}}{\nu_{\alpha}Y_{\alpha,0}}.$ Then the series $\Phi(z):=\sum_{k\geq1}X_{\alpha,k}z^k$ is invertible and its inverse is of the form $\Phi^{-1}(w)=\sum_{k\geq1}f_{\alpha,k}w^k$ with $f_{\alpha,k}\in\C\{x\},$ $f_{\alpha,1}=\frac{1}{X_{\alpha,1}}$ and $f_{\alpha,2}=-\frac{X_{\alpha,2}}{(X_{\alpha,1})^3}.$ Therefore, the equality $y=(\Psi^{-1}(-F_{\alpha,0}))(x,z)$ is equivalent to $z=(\Phi^{-1}(y^ {\frac{1}{\nu_{\alpha}}}))(x)$ and therefore to $p=(\Phi^{-1}(y^{\frac{1}{\nu_{\alpha}}}))(x)+\varphi_{\alpha}(x).$ As a result, in a neighborhood of $m,$ the slopes $p_{\hspace{-0.2mm}j}$ ($j=1,\ldots,\nu_{\alpha}$) of $\mathrm{T}_{(x,y)}\W_{\alpha}$~are~given~by
\begin{align*}
p_{\hspace{-0.2mm}j}=\lambda_{\alpha,j}(x,y):=\varphi_{\alpha}(x)+\sum_{k\geq1}f_{\alpha,k}(x)\zeta_{\alpha}^{jk}y^{\frac{k}{\nu_{\alpha}}},\quad\text{where}\hspace{1mm} \mathrm{\zeta_{\alpha}}=\exp(\tfrac{2\mathrm{i}\pi}{\nu_{\alpha}}).
\end{align*}
Note furthermore that
\begin{SMALL}
\begin{align}\label{equa:preuve-thm-A}
\frac{f_{\alpha,2}}{(f_{\alpha,1})^2}
=-\frac{X_{\alpha,2}}{X_{\alpha,1}}
=-\frac{Y_{\alpha,1}}{\nu_{\alpha}Y_{\alpha,0}}
=\frac{1}{\nu_{\alpha}}\left(\frac{G_{\alpha,1}}{G_{\alpha,0}}-\frac{U_{\alpha,1}}{U_{\alpha,0}}\right)
=\frac{1}{\nu_{\alpha}}\left[\left(\frac{\partial_z F_{\alpha,1}}{F_{\alpha,1}}\right)\Big|_{z=0}-\frac{U_{\alpha,1}}{U_{\alpha,0}}\right]
=\frac{1}{\nu_{\alpha}}\left[\left(\frac{\partial_z\partial_{y}F_{\alpha}}{\partial_{y}F_{\alpha}}\right)\Big|_{(y,z)=(0,0)}
-\hspace{-3.5mm}\sum\limits_{\hspace{3.5mm}\beta=1,\beta\neq\alpha}^{n}\frac{\nu_{\beta}}{\varphi_{\alpha}-\varphi_{\beta}}\right].
\end{align}
\end{SMALL}
\hspace{-1.38mm}We will now apply Lemma~\ref{lem:eta-3-tissu} to compute $\eta_{ijk}^{\alpha\alpha\alpha}.$ Setting $w_{\alpha}=y^{\frac{1}{\nu_{\alpha}}}$ we obtain
\begin{SMALL}
\begin{align*}
&
\partial_x\lambda_{\alpha,k}=\varphi_{\alpha}^{'}+f_{\alpha,1}^{'}\zeta_{\alpha}^{k}w_{\alpha}+f_{\alpha,2}^{'}\zeta_{\alpha}^{2k}w_{\alpha}^2+f_{\alpha,3}^{'}\zeta_{\alpha}^{3k}w_{\alpha}^3+\cdots,\\
&
\partial_y(\lambda_{\alpha,i}\lambda_{\alpha,j})=\frac{w_{\alpha}}{\nu_{\alpha}y}
\Big[
\varphi_{\alpha}f_{\alpha,1}(\zeta_{\alpha}^{i}+\zeta_{\alpha}^{j})
+2\Big(\varphi_{\alpha}f_{\alpha,2}(\zeta_{\alpha}^{2i}+\zeta_{\alpha}^{2j})+f_{\alpha,1}^2\zeta_{\alpha}^{i+j}\Big)w_{\alpha}
+3\Big(\varphi_{\alpha}f_{\alpha,3}(\zeta_{\alpha}^{3i}+\zeta_{\alpha}^{3j})+f_{\alpha,1}f_{\alpha,2}(\zeta_{\alpha}^{2i+j}+\zeta_{\alpha}^{i+2j})\Big)w_{\alpha}^2+\cdots
\Big],\\
&
(\lambda_{\alpha,i}-\lambda_{\alpha,k})(\lambda_{\alpha,j}-\lambda_{\alpha,k})=w_{\alpha}^{2}(\zeta_{\alpha}^{i}-\zeta_{\alpha}^{k})(\zeta_{\alpha}^{j}-\zeta_{\alpha}^{k})
\Big[f_{\alpha,1}^2+f_{\alpha,1}f_{\alpha,2}(\zeta_{\alpha}^{i}+\zeta_{\alpha}^{j}+2\zeta_{\alpha}^{k})w_{\alpha}+\cdots
\Big].
\end{align*}
\end{SMALL}
\hspace{-1.38mm}According to Lemma~\ref{lem:eta-3-tissu}, we have $\eta_{ijk}^{\alpha\alpha\alpha}=a_{ijk}(x,y)\mathrm{d}x+b_{ijk}(x,y)\mathrm{d}y,$ where
\begin{SMALL}
\begin{align*}
a_{ijk}\hspace{-2.3cm}&\hspace{2.3cm}=
-\frac{\left(\partial_y(\lambda_{\alpha,i}\lambda_{\alpha,j})-\partial_x\lambda_{\alpha,k}\right)\lambda_{\alpha,k}}{(\lambda_{\alpha,i}-\lambda_{\alpha,k})(\lambda_{\alpha,j}-\lambda_{\alpha,k})}
-\frac{\left(\partial_y(\lambda_{\alpha,k}\lambda_{\alpha,j})-\partial_x\lambda_{\alpha,i}\right)\lambda_{\alpha,i}}{(\lambda_{\alpha,k}-\lambda_{\alpha,i})(\lambda_{\alpha,j}-\lambda_{\alpha,i})}
-\frac{\left(\partial_y(\lambda_{\alpha,i}\lambda_{\alpha,k})-\partial_x\lambda_{\alpha,j}\right)\lambda_{\alpha,j}}{(\lambda_{\alpha,i}-\lambda_{\alpha,j})(\lambda_{\alpha,k}-\lambda_{\alpha,j})}\\
\hspace{-2.3cm}&\hspace{2.3cm}=
-\frac{1}{\nu_{\alpha}y}
\left[
\frac{\varphi_{\alpha}}{f_{\alpha,1}^2}\Big(f_{\alpha,1}^2-\varphi_{\alpha}f_{\alpha,2}\Big)
+2\frac{\varphi_{\alpha}^2}{f_{\alpha,1}^3}\Big(\zeta_{\alpha}^i+\zeta_{\alpha}^j+\zeta_{\alpha}^k\Big)
\Big(f_{\alpha,2}^2-f_{\alpha,1}f_{\alpha,3}\Big)w_{\alpha}+A_{-1}w_{\alpha}^{2}\right]+A_0,
\quad{\fontsize{11}{11pt}\text{with}}\hspace{1mm}A_{-1},A_{0}\in\C\{x,w_{\alpha}\}
\end{align*}
\end{SMALL}
\hspace{-1.38mm}and
\begin{SMALL}
\begin{align*}
b_{ijk}&=
\frac{\partial_y(\lambda_{\alpha,i}\lambda_{\alpha,j})-\partial_x\lambda_{\alpha,k}}{(\lambda_{\alpha,i}-\lambda_{\alpha,k})(\lambda_{\alpha,j}-\lambda_{\alpha,k})}
+\frac{\partial_y(\lambda_{\alpha,k}\lambda_{\alpha,j})-\partial_x\lambda_{\alpha,i}}{(\lambda_{\alpha,k}-\lambda_{\alpha,i})(\lambda_{\alpha,j}-\lambda_{\alpha,i})}
+\frac{\partial_y(\lambda_{\alpha,i}\lambda_{\alpha,k})-\partial_x\lambda_{\alpha,j}}{(\lambda_{\alpha,i}-\lambda_{\alpha,j})(\lambda_{\alpha,k}-\lambda_{\alpha,j})}\\
&=
\frac{1}{\nu_{\alpha}y}
\left[
\frac{1}{f_{\alpha,1}^2}\Big(2f_{\alpha,1}^2-\varphi_{\alpha}f_{\alpha,2}\Big)
+\frac{1}{f_{\alpha,1}^3}\Big(\zeta_{\alpha}^i+\zeta_{\alpha}^j+\zeta_{\alpha}^k\Big)\Big(f_{\alpha,1}^2f_{\alpha,2}-2\varphi_{\alpha}f_{\alpha,1}f_{\alpha,3}+2\varphi_{\alpha}f_{\alpha,2}^2\Big)w_{\alpha}
+B_{-1}w_{\alpha}^{2}\right]+B_0,
\quad{\fontsize{11}{11pt}\text{with}}\hspace{1mm}B_{-1},B_{0}\in\C\{x,w_{\alpha}\}.
\end{align*}
\end{SMALL}
\hspace{-1.38mm}Since $\eta_1=\hspace{-3.5mm}\sum\limits_{\hspace{3.5mm}\alpha=1,\nu_{\alpha}\geq3}^{n}\hspace{-3.5mm}\sum\limits_{\hspace{3.5mm}1\le i<j<k\le\nu_{\alpha}}\eta_{ijk}^{\alpha\alpha\alpha}$ is a uniform and meromorphic $1$-form, it follows that the principal part of the \textsc{Laurent} series of $\eta_1$ at $y=0$ is given by $\frac{\theta_1}{y}$, where
\begin{align*}
\theta_1&=\sum_{\raisebox{2mm}{$\underset{\nu_{\alpha}\geq3}{\underset{\alpha=1}{}}$}}^{n}\binom{{\nu_{\alpha}}}{{3}}
\bigg(-\frac{\varphi_{\alpha}(f_{\alpha,1}^2-\varphi_{\alpha}f_{\alpha,2})}{\nu_{\alpha}f_{\alpha,1}^2}\mathrm{d}x+\frac{2f_{\alpha,1}^2-\varphi_{\alpha}f_{\alpha,2}}{\nu_{\alpha}f_{\alpha,1}^2}\mathrm{d}y\bigg)\\
&=\frac{1}{6}\sum_{\raisebox{2mm}{$\underset{\nu_{\alpha}\geq3}{\underset{\alpha=1}{}}$}}^{n}(\nu_{\alpha}-1)(\nu_{\alpha}-2)
\bigg(\Big(1-\frac{\varphi_{\alpha}f_{\alpha,2}}{f_{\alpha,1}^2}\Big)\Big(\mathrm{d}y-\varphi_{\alpha}\mathrm{d}x\Big)+\mathrm{d}y\bigg).
\end{align*}

\noindent It remains to determine the principal part of the \textsc{Laurent} series of $\eta_2$ at $y=0.$ Again according to Lemma~\ref{lem:eta-3-tissu}, we have $ \eta_{ijk}^{\alpha\alpha\beta}=\widetilde{a}_{ijk}(x,y)\mathrm{d}x+\widetilde{b}_{ijk}(x,y)\mathrm{d}y,$ where
\begin{SMALL}
\begin{align*}
\widetilde{a}_{ijk}\hspace{-1.9cm}&\hspace{1.9cm}=
-\frac{\left(\partial_y(\lambda_{\alpha,i}\lambda_{\alpha,j})-\partial_x\lambda_{\beta,k}\right)\lambda_{\beta,k}}{(\lambda_{\alpha,i}-\lambda_{\beta,k})(\lambda_{\alpha,j}-\lambda_{\beta,k})}
-\frac{\left(\partial_y(\lambda_{\beta,k}\lambda_{\alpha,j})-\partial_x\lambda_{\alpha,i}\right)\lambda_{\alpha,i}}{(\lambda_{\beta,k}-\lambda_{\alpha,i})(\lambda_{\alpha,j}-\lambda_{\alpha,i})}
-\frac{\left(\partial_y(\lambda_{\alpha,i}\lambda_{\beta,k})-\partial_x\lambda_{\alpha,j}\right)\lambda_{\alpha,j}}{(\lambda_{\alpha,i}-\lambda_{\alpha,j})(\lambda_{\beta,k}-\lambda_{\alpha,j})}\\
\hspace{-1.9cm}&\hspace{1.9cm}=\frac{1}{\nu_{\alpha}y}
\left[
\frac{\varphi_{\alpha}\varphi_{\beta}}{\varphi_{\alpha}-\varphi_{\beta}}+
\frac{\left(\zeta_{\alpha}^i+\zeta_{\alpha}^j\right)\left((\varphi_{\alpha}-\varphi_{\beta})f_{\alpha,2}-f_{\alpha,1}^2\right)\varphi_{\alpha}\varphi_{\beta}}
{(\varphi_{\alpha}-\varphi_{\beta})^2f_{\alpha,1}}w_{\alpha}+
\frac{(\nu_{\alpha}+\nu_{\beta})\zeta_{\beta}^k\varphi_{\alpha}^2f_{\beta,1}}{\nu_{\beta}(\varphi_{\alpha}-\varphi_{\beta})^2}w_{\beta}+\cdots
\right]+\widetilde{A}_0,\quad{\fontsize{11}{11pt}\text{with}}\hspace{1mm}\widetilde{A}_{0}\in\C\{x,w_{\alpha},w_{\beta}\}
\end{align*}
\end{SMALL}
\hspace{-1.38mm}and

\begin{SMALL}
\begin{align*}
\widetilde{b}_{ijk}&=
\frac{\partial_y(\lambda_{\alpha,i}\lambda_{\alpha,j})-\partial_x\lambda_{\beta,k}}{(\lambda_{\alpha,i}-\lambda_{\beta,k})(\lambda_{\alpha,j}-\lambda_{\beta,k})}
+\frac{\partial_y(\lambda_{\beta,k}\lambda_{\alpha,j})-\partial_x\lambda_{\alpha,i}}{(\lambda_{\beta,k}-\lambda_{\alpha,i})(\lambda_{\alpha,j}-\lambda_{\alpha,i})}
+\frac{\partial_y(\lambda_{\alpha,i}\lambda_{\beta,k})-\partial_x\lambda_{\alpha,j}}{(\lambda_{\alpha,i}-\lambda_{\alpha,j})(\lambda_{\beta,k}-\lambda_{\alpha,j})}\\
&=-\frac{1}{\nu_{\alpha}y}
\left[
\frac{\varphi_{\beta}}{\varphi_{\alpha}-\varphi_{\beta}}+
\frac{\left(\zeta_{\alpha}^i+\zeta_{\alpha}^j\right)\left(\varphi_{\beta}(\varphi_{\alpha}-\varphi_{\beta})f_{\alpha,2}-\varphi_{\alpha}f_{\alpha,1}^2\right)}{(\varphi_{\alpha}-\varphi_{\beta})^2f_{\alpha,1}}w_{\alpha}+
\frac{\Big((2\nu_{\alpha}+\nu_{\beta})\varphi_{\alpha}-\nu_{\alpha}\varphi_{\beta}\Big)\zeta_{\beta}^kf_{\beta,1}}{\nu_{\beta}(\varphi_{\alpha}-\varphi_{\beta})^2}w_{\beta}+\cdots
\right]+\widetilde{B}_0,\hspace{1.5mm}{\fontsize{11}{11pt}\text{with}}\hspace{1mm}\widetilde{B}_{0}\in\C\{x,w_{\alpha},w_{\beta}\}.
\end{align*}
\end{SMALL}
\hspace{-1.38mm}The $1$-form
$\eta_2=\hspace{-3.5mm}\sum\limits_{\hspace{3.5mm}\alpha=1,\nu_{\alpha}\geq2}^{n}\sum\limits_{1\le i<j\le\nu_{\alpha}}\hspace{-3.5mm}\sum\limits_{\hspace{3.5mm}\beta=1,\beta\neq\alpha}^{n}\sum\limits_{k=1}^{\nu_{\beta}}\eta_{ijk}^{\alpha\alpha\beta}$ being uniform and meromorphic, it follows that the principal part of the \textsc{Laurent} series of $\eta_2$ at $y=0$ is given by $\frac{\theta_2}{y}$, where
\begin{align*}
\theta_2&=
\sum_{\raisebox{2mm}{$\underset{\nu_{\alpha}\geq2}{\underset{\alpha=1}{}}$}}^{n}\binom{{\nu_{\alpha}}}{{2}}
\sum_{\raisebox{2mm}{$\underset{\beta\neq\alpha}{\underset{\beta=1}{}}$}}^{n}
\nu_{\beta}\left(\frac{\varphi_{\alpha}\varphi_{\beta}}{\nu_{\alpha}(\varphi_{\alpha}-\varphi_{\beta})}\mathrm{d}x-
\frac{\varphi_{\beta}}{\nu_{\alpha}(\varphi_{\alpha}-\varphi_{\beta})}\mathrm{d}y\right)\\
&=-\frac{1}{2}
\sum_{\raisebox{2mm}{$\underset{\nu_{\alpha}\geq2}{\underset{\alpha=1}{}}$}}^{n}\left(\nu_{\alpha}-1\right)\left(\mathrm{d}y-\varphi_{\alpha}\mathrm{d}x\right)
\sum_{\raisebox{2mm}{$\underset{\beta\neq\alpha}{\underset{\beta=1}{}}$}}^{n}\frac{\nu_{\beta}\varphi_{\beta}}{\varphi_{\alpha}-\varphi_{\beta}}.
\end{align*}

\noindent As a consequence, the principal part of the \textsc{Laurent} series of $\eta(\W)$ at $y=0$ is given by $\frac{\theta}{y}$, where
\begin{align*}
\theta=\theta_1+\theta_2
=\frac{1}{6}\sum_{\raisebox{2mm}{$\underset{\nu_{\alpha}\geq2}{\underset{\alpha=1}{}}$}}^{n}
(\nu_{\alpha}-1)
\Bigg\{
\Bigg[
(
\nu_{\alpha}-2
)
\Big(
1-\frac{\varphi_{\alpha}f_{\alpha,2}}{f_{\alpha,1}^2}
\Big)
-3\sum_{\raisebox{2mm}{$\underset{\beta\neq\alpha}{\underset{\beta=1}{}}$}}^{n}\frac{\nu_{\beta}\varphi_{\beta}}{\varphi_{\alpha}-\varphi_{\beta}}
\Bigg]
(
\mathrm{d}y-\varphi_{\alpha}\mathrm{d}x
)
+(\nu_{\alpha}-2)\mathrm{d}y
\Bigg\}.
\end{align*}

\noindent Thanks to~(\ref{equa:preuve-thm-A}), the $1$-form $\theta$ can be rewritten as
\begin{Small}
\begin{align*}
\theta
&=\frac{1}{6}\sum_{\raisebox{2mm}{$\underset{\nu_{\alpha}\geq2}{\underset{\alpha=1}{}}$}}^{n}
(\nu_{\alpha}-1)
\Bigg\{
\Bigg[
(
\nu_{\alpha}-2
)
\bigg(1-\frac{\varphi_{\alpha}}{\nu_{\alpha}}
\Big(
\frac{\partial_z\partial_{y}F_{\alpha}}{\partial_{y}F_{\alpha}}
\Big)\Big|_{(y,z)=(0,0)}
\bigg)
+\sum_{\raisebox{2mm}{$\underset{\beta\neq\alpha}{\underset{\beta=1}{}}$}}^{n}
\frac{\nu_{\beta}\left((\nu_{\alpha}-2)\varphi_{\alpha}-3\nu_{\alpha}\varphi_{\beta}\right)}{\nu_{\alpha}\left(\varphi_{\alpha}-\varphi_{\beta}\right)}
\Bigg]
(
\mathrm{d}y-\varphi_{\alpha}\mathrm{d}x
)
+(\nu_{\alpha}-2)\mathrm{d}y
\Bigg\}.
\end{align*}
\end{Small}
\hspace{-1.38mm}Now, we have
\begin{align*}
\sum_{\raisebox{2mm}{$\underset{\beta\neq\alpha}{\underset{\beta=1}{}}$}}^{n}
\frac{\nu_{\beta}\left((\nu_{\alpha}-2)\varphi_{\alpha}-3\nu_{\alpha}\varphi_{\beta}\right)}{\nu_{\alpha}\left(\varphi_{\alpha}-\varphi_{\beta}\right)}
=\frac{1}{\nu_{\alpha}}
\Bigg[
(\nu_{\alpha}-2)(d-\nu_{\alpha})-2(\nu_{\alpha}+1)
\sum_{\raisebox{2mm}{$\underset{\beta\neq\alpha}{\underset{\beta=1}{}}$}}^{n}\frac{\nu_{\beta}\varphi_{\beta}}{\varphi_{\alpha}-\varphi_{\beta}}
\Bigg],
\hspace{1mm}\text{because}\hspace{1mm}
d=\sum_{\beta=1}^{n}\nu_{\beta}.
\end{align*}
\noindent Therefore
\begin{small}
\begin{align*}
\theta
&=\frac{1}{6}\sum_{\raisebox{2mm}{$\underset{\nu_{\alpha}\geq2}{\underset{\alpha=1}{}}$}}^{n}
(\nu_{\alpha}-1)
\Bigg\{\frac{1}{\nu_{\alpha}}
\Bigg[
(
\nu_{\alpha}-2
)
\bigg(d-\varphi_{\alpha}
\Big(
\frac{\partial_z\partial_{y}F_{\alpha}}{\partial_{y}F_{\alpha}}
\Big)\Big|_{(y,z)=(0,0)}
\bigg)
-2(\nu_{\alpha}+1)\sum_{\raisebox{2mm}{$\underset{\beta\neq\alpha}{\underset{\beta=1}{}}$}}^{n}
\frac{\nu_{\beta}\varphi_{\beta}}{\varphi_{\alpha}-\varphi_{\beta}}
\Bigg]
(
\mathrm{d}y-\varphi_{\alpha}\mathrm{d}x
)
+(\nu_{\alpha}-2)\mathrm{d}y
\Bigg\}\\
&=
\frac{1}{6}\sum_{\alpha=1}^{n}
(\nu_{\alpha}-1)
\Big(
\psi_{\alpha}(\mathrm{d}y-\varphi_{\alpha}\mathrm{d}x)+(\nu_{\alpha}-2)\mathrm{d}y
\Big),
\end{align*}
\end{small}
hence the theorem follows.
\end{proof}


\section{Holomorphy of the curvature of the dual web of a homogeneous foliation on $\pp$}\label{sec:holomorphie-courbure-LegH}
\vspace{2mm}

\noindent Following~\cite[Definition 2.1]{BM18Bull} a {\sl homogeneous} foliation $\mathcal{H}$ of degree $d$ on  $\pp$ is given, in a suitable choice of affine coordinates $(x,y)$, by a homogeneous $1$-form $\omega=A(x,y)\mathrm{d}x+B(x,y)\mathrm{d}y,$ where $A,B\in\mathbb{C}[x,y]_d$ and~$\mathrm{gcd}(A,B)=1.$

\noindent The tangent lines to the leaves of  $\mathcal{H}$ are the leaves of a $d$-web on the dual projective plane $\pd$, called the {\sl \textsc{Legendre} transform} (or {\sl dual web}) of $\mathcal{H}$, and denoted by $\Leg\mathcal{H}.$ More precisely, let $(p,q)$ be the affine chart of $\pd$ corresponding to the line $\{y=px-q\}\subset{\mathbb{P}^{2}_{\mathbb{C}}}$; then $\Leg\mathcal{H}$ is given by the implicit differential equation (\emph{see}~\cite{MP13})
\begin{align}\label{equa:LegH}
A(x,px-q)+pB(x,px-q)=0, \qquad \text{with} \qquad x=\frac{\mathrm{d}q}{\mathrm{d}p}.
\end{align}
The \textsc{Gauss} map of $\mathcal H$ is the rational map $\mathcal{G}_{\mathcal{H}}\hspace{1mm}\colon\pp\dashrightarrow \pd$ defined at every regular point $m$ of $\mathcal H$ by $\mathcal{G}_{\mathcal{H}}(m)=\mathrm{T}^{\mathbb{P}}_{m}\mathcal{H},$ where $\mathrm{T}^{\mathbb{P}}_{m}\mathcal{H}$ denotes the tangent line to the leaf of $\mathcal{H}$ passing through $m.$ According to \cite[Lemma~3.2]{BM18Bull}~the~discriminant of $\Leg\mathcal{H}$ decomposes as $$\Delta(\Leg\mathcal{H})=\G_{\mathcal{H}}(\ItrH)\cup\radHd\cup\check{O},$$ where $\ItrH$ is the transverse inflection divisor of $\mathcal{H}$, $\radHd$ is the set of lines dual to the radial singularities of $\mathcal{H}$ and finally $\check{O}$ is the dual line of the origin of the affine chart $(x,y).$ For precise definitions of radial singularities and the inflection divisor of a foliation on $\pp$, we refer to \cite[\S1.3]{BM18Bull}.

\noindent To the homogeneous foliation $\mathcal{H}$ we can also associate the rational map $\Gunderline_{\mathcal{H}}\hspace{1mm}\colon\mathbb{P}^{1}_{\mathbb{C}}\rightarrow \mathbb{P}^{1}_{\mathbb{C}}$ defined by $$\Gunderline_{\mathcal{H}}([y:x])=[-A(x,y):B(x,y)],$$
which allows us to completely determine the divisor $\ItrH$ and the set $\radH$ (\emph{see}~\cite[Section~2]{BM18Bull}):
\begin{itemize}
\item [$\bullet$] $\radH$ consists of $[b:a:0]\in L_{\infty}$ such that $[a:b]\in\mathbb{P}^{1}_{\mathbb{C}} $ is a fixed critical point of $\Gunderline_{\mathcal{H}}$;
\item [$\bullet$] $\ItrH=\prod\limits_{i}T_{i}^{n_i}$, where $T_{i}=(b_i\hspace{0.2mm}y-a_i\hspace{0.2mm}x=0)$ and $[a_i:b_i]\in\mathbb{P}^{1}_{\mathbb{C}}$  is a non-fixed critical point of $\Gunderline_{\mathcal{H}}$ of multiplicity~$n_i.$
\end{itemize}

\noindent We know from \cite[Lemma~3.1]{BFM13} that if the curvature of $\Leg\mathcal{H}$ is holomorphic on $\pd\hspace{-0.3mm}\setminus\hspace{-0.3mm } \check{O},$ then $\Leg\mathcal{H}$~is~flat. The~following~theorem is an effective criterion for the holomorphy of the curvature of $\Leg\mathcal{H}$ along an irreducible component $D$ of $\Delta(\mathrm{Leg}\mathcal{H })\setminus\check{O}.$
\begin{thm}\label{thm:holomorphie-courbure-homogene}
{\sl Let $\mathcal{H}$ be a homogeneous foliation of degree $d\geq3$ on $\pp$ defined by the $1$-form
$$\omega=A(x,y)\mathrm{d}x+B(x,y)\mathrm{d}y,\quad A,B\in\mathbb{C}[x,y]_d, \hspace{2mm}\gcd(A,B)=1.$$ Let $(p,q)$ be the affine chart of $\pd$ associated to the line $\{y=px-q\}\subset{ \mathbb{P}^{2}_{\mathbb{C}}}$ and let $D=\{p=p_0\}$ be an irreducible component of $\Delta(\mathrm{Leg}\mathcal{H} )\setminus\check{O}.$ Write $\Gunderline_{\mathcal H}^{-1}([p_0:1])=\{[a_1:b_1],\ldots,[a_n:b_n]\} $ and denote by $\nu_i$ the ramification index of $\Gunderline_{\mathcal H}$ at the point $[a_i:b_i]\in\mathbb{P}_{\C}^{1}.$ For $ i\in\{1,\ldots,n\}$, define the polynomials $P_i\in\C[x,y]_{d-\nu_i}$ and $Q_i\in\C[x,y]_ {2d-\nu_i-1}$ by
\begin{Small}
\begin{align*}
P_i(x,y;a_i,b_i):=\frac{\left|
\begin{array}{cc}
A(x,y)  &  A(b_i,a_i)
\\
B(x,y)  &  B(b_i,a_i)
\end{array}
\right|}{(b_iy-a_i\hspace{0.2mm}x)^{\nu_i}}
\quad{\fontsize{11}{11pt}\text{and}}\quad
Q_i(x,y;a_i,b_i):=(\nu_i-2)\left(\dfrac{\partial{B}}{\partial{x}}-\dfrac{\partial{A}}{\partial{y}}\right)P_i(x,y;a_i,b_i)+2(\nu_i+1)
\left|\begin{array}{cc}
\dfrac{\partial{P_i}}{\partial{x}} &  A(x,y)
\vspace{2mm}
\\
\dfrac{\partial{P_i}}{\partial{y}} &  B(x,y)
\end{array} \right|.
\end{align*}
\end{Small}
\hspace{-1mm}Then the curvature of  $\mathrm{Leg}\mathcal{H}$ is holomorphic on $D$ if and only if
\begin{align*}
\sum_{i=1}^{n}\left(1-\frac{1}{\nu_{i}}\right)\frac{(p_0\hspace{0.2mm}b_i-a_i)Q_i(b_i,a_i;a_i,b_i)}{P_i(b_i,a_i;a_i,b_i)B(b_i,a_i)}=0.
\end{align*}
}
\end{thm}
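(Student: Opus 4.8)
The plan is to derive the statement from Theorem~\ref{thm-critere-holomorphie-kw}. First I would write $\Leg\mathcal{H}$ through the implicit equation $F(p,q,x)=A(x,px-q)+pB(x,px-q)=0$ coming from~(\ref{equa:LegH}), where the slope is $x=\mathrm{d}q/\mathrm{d}p$; by homogeneity $F$ is a polynomial of degree $d$ in $x$. Since the component $D=\{p=p_0\}$ is a level set of the \emph{independent} variable $p$, Theorem~\ref{thm-critere-holomorphie-kw} cannot be applied verbatim: I would instead take $q$ as independent variable and $p-p_0$ as dependent variable, so that $D=\{p-p_0=0\}$ and the corresponding slope becomes $1/x$. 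In these coordinates $\Leg\mathcal{H}$ is defined by the reciprocal polynomial $\widehat{F}(q,p-p_0,P):=P^{d}F(p,q,1/P)$, $P=\mathrm{d}(p-p_0)/\mathrm{d}q$. Before applying the criterion one must check that $\Leg\mathcal{H}$ is smooth along $D$; this reduces to the non-vanishing of $\partial_{p}F$ at the tangency points over $D$, and is where the hypothesis $\gcd(A,B)=1$ enters.

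Next I would make the restriction to $D$ explicit. By the description of $\ItrH$ and $\radH$ recalled above, the fibre $\Gunderline_{\mathcal H}^{-1}([p_0:1])=\{[a_1:b_1],\ldots,[a_n:b_n]\}$ with multiplicities $\nu_i$ yields the factorisation $A(x,y)+p_0B(x,y)=c\prod_{i=1}^{n}(a_ix-b_iy)^{\nu_i}$, $c\in\C^{*}$, together with $A(b_i,a_i)+p_0B(b_i,a_i)=0$. Substituting $y=p_0x-q$ gives $F(p_0,q,x)=c\prod_{i=1}^{n}\big((a_i-b_ip_0)x+b_iq\big)^{\nu_i}$, so that $\widehat F(q,0,P)$ factors with the $n$ distinct slopes $\varphi_i(q)=\dfrac{b_ip_0-a_i}{b_iq}=:\dfrac{c_i}{q}$, of multiplicity $\nu_i$. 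A key observation is that the corresponding tangency point lies in the direction $(b_i,a_i)$, i.e. equals $\frac{q}{b_ip_0-a_i}(b_i,a_i)$; the homogeneity of $A,B$ then lets me evaluate all the quantities entering Theorem~\ref{thm-critere-holomorphie-kw} at the single point $(b_i,a_i)$.

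The decisive simplification is that each $\psi_i$ is \emph{constant} in $q$. Indeed $\varphi_i-\varphi_j=(c_i-c_j)/q$, so the interaction sums $\sum_{j\neq i}\nu_j\varphi_j/(\varphi_i-\varphi_j)=\sum_{j\neq i}\nu_jc_j/(c_i-c_j)$ are independent of $q$; and, using the homothety invariance $(p,q)\mapsto(p,tq)$ of $\Leg\mathcal H$ together with the direct evaluation above, the logarithmic term $\rho_i=\varphi_i\,\partial_P\partial_{(p-p_0)}\widehat F/\partial_{(p-p_0)}\widehat F$ taken at $(q,0,\varphi_i)$ is also $q$-independent. Granting this, Theorem~\ref{thm-critere-holomorphie-kw} shows that the second condition $\sum_i(\nu_i-1)\frac{\mathrm{d}}{\mathrm{d}q}\psi_i\equiv0$ holds automatically, while the first condition becomes $\frac1q\sum_i(\nu_i-1)c_i\psi_i\equiv0$, i.e. the single scalar identity $\sum_{i=1}^{n}(\nu_i-1)c_i\psi_i=0$.

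It then remains to identify $\nu_i\psi_i$ with the polynomial data $P_i,Q_i$. Writing $P_i\doteq\prod_{j\neq i}(a_jx-b_jy)^{\nu_j}$ (up to a constant, after using $A(b_i,a_i)=-p_0B(b_i,a_i)$ in the $2\times2$ determinant defining it), logarithmic differentiation at $(b_i,a_i)$ matches the interaction term $-2(\nu_i+1)\sum_{j\neq i}\nu_jc_j/(c_i-c_j)$ with $2(\nu_i+1)\,b_i\det\!\begin{pmatrix}\partial_xP_i&A\\\partial_yP_i&B\end{pmatrix}\big/(P_iB)$ evaluated there; and a homogeneity computation of $\rho_i$ identifies $(\nu_i-2)(d-\rho_i)$ with $(\nu_i-2)\,b_i(\partial_xB-\partial_yA)/B$ at $(b_i,a_i)$. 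Together these give $\nu_i\psi_i=\dfrac{b_i\,Q_i(b_i,a_i;a_i,b_i)}{P_i(b_i,a_i;a_i,b_i)\,B(b_i,a_i)}$, and since $c_i=(p_0b_i-a_i)/b_i$, the identity $\sum_i(\nu_i-1)c_i\psi_i=0$ is exactly the asserted criterion $\sum_{i=1}^{n}\big(1-\frac1{\nu_i}\big)\frac{(p_0b_i-a_i)Q_i(b_i,a_i;a_i,b_i)}{P_i(b_i,a_i;a_i,b_i)B(b_i,a_i)}=0$. The main obstacle is precisely the computation of $\rho_i$ through the double change of variable (reciprocal slope $P=1/x$ and restriction to the line $y=px-q$) and checking that it collapses to the divergence term $b_i(\partial_xB-\partial_yA)/B$; verifying the smoothness of $\Leg\mathcal H$ along $D$ is the other delicate point.
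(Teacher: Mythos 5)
Your proposal is correct and follows essentially the same route as the paper: the same change of variables $(\check{x},\check{y},\check{p})=(q,\,p-p_0,\,1/x)$ putting $D=\{\check y=0\}$, the same factorisation $A+p_0B=c\prod(a_ix-b_iy)^{\nu_i}$ (the paper's Lemma~\ref{lem:valeur-critique}), the same slopes $\varphi_i=(p_0b_i-a_i)/(b_iq)$, the same observation that each $\psi_i$ is constant in $q$ (so only the first condition of Theorem~\ref{thm-critere-holomorphie-kw} survives), and the same Euler-identity bookkeeping to convert $\psi_i$ into $Q_i/(\nu_iP_iB)$. The only cosmetic difference is that you work homogeneously throughout, whereas the paper first conjugates so that $b_i\neq0$ and computes in the chart $z=y/x$; your identification of the smoothness check with $\partial_{\check y}F=B(1,r_i)\neq0$ (via $\gcd(A,B)=1$) and of $\rho_i$ with $-(p_0-r_i)\partial_yB(1,r_i)/B(1,r_i)$ is exactly what the paper does.
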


\begin{rem}
In particular, if $D\subset\radHd\setminus\G_{\mathcal{H}}(\ItrH),$ or equivalently, if all the critical points of  $\Gunderline_{\mathcal H}$ in the fiber $\Gunderline_{\mathcal H}^{-1}([p_0:1])$ are fixed, then the curvature $K(\mathrm{Leg}\mathcal{H}) $ is always holomorphic on $D$; indeed, we then have $p_0\hspace{0.2mm}b_i-a_i=0$ if $\nu_i\geq2.$
\end{rem}

\noindent Combining this remark with \cite[Lemma~3.1]{BFM13}, we recover Theorem~3.1 of \cite{BM18Bull}: the $d$-web $\Leg\mathcal{H}$ is flat if and only if its curvature $K(\Leg\mathcal{H})$ is holomorphic on $\G_{\mathcal{H}}(\ItrH).$

\begin{rem}\label{rem:indice-ramification-homogene}
Assume that $\nu_{i}=\nu\geq2$ for all $i\in\{1,\ldots,n\}.$ The following assertions hold:
\begin{itemize}
\item [\textbf{\textit{1.}}] When $\nu=2$ (which implies that $d$ is even), the curvature of $\mathrm{Leg}\mathcal{H}$ is always holomorphic~on~$D.$

\item [\textbf{\textit{2.}}] When $\nu\geq3,$ the curvature of $\mathrm{Leg}\mathcal{H}$ is holomorphic on $D$ if and only if
\begin{align*}
\sum_{i=1}^{n}\frac{\big(p_0\hspace{0.2mm}b_i-a_i\big)\big(\partial_{x}B(b_i,a_i)-\partial_{y} A(b_i,a_i)\big)}{B(b_i,a_i)}=0.
\end{align*}
In particular, if the fiber $\Gunderline_{\mathcal H}^{-1}([p_0:1])$ contains a single non-fixed critical point of $\Gunderline_{\mathcal H}$, say $[a: b]$, then
\begin{itemize}
\item [--] either $\Gunderline_{\mathcal H}^{-1}([p_0:1])=\{[a:b]\}$, in which case $\nu=d$;
\item [--] or $\#\Gunderline_{\mathcal H}^{-1}([p_0:1])=2$, in which case $d$ is necessarily even, $d=2k,$ and $\nu=k.$
\end{itemize}
In both cases, the curvature of $\mathrm{Leg}\mathcal{H}$ is holomorphic on $D$ if and only if the $2$-form $\mathrm{d}\omega$ vanishes on the line~$T=(by-ax=0),$ which is the transverse inflection line of $\mathcal{H}$ associated to the non-fixed critical point $[a:b]$ of $\Gunderline_{\mathcal H}.$
\end{itemize}
\end{rem}
\bigskip

\begin{eg}
Consider the homogeneous foliation $\mathcal{H}$ of even degree $2k\geq4$ on $\pp$ defined by the $1$-form
\begin{align*}
\hspace{3.3cm}&\omega=y^k(y-x)^k\mathrm{d}x+(y-\lambda\,x)^k(y-\mu\,x)^k\mathrm{d}y,&& \text{where }\lambda,\mu\in\C\setminus\{0,1\}.
\end{align*}
In the affine chart $(p,q)$ of $\pd$ associated to the line $\{y=px-q\}\subset{\pp},$ the web $\Leg\mathcal{H}$ is implicitly described by the equation
\begin{align*}
(px-q)^k(px-q-x)^k+p(px-q-\lambda\,x)^k(px-q-\mu\,x)^k=0, \qquad \text{with} \qquad x=\frac{\mathrm{d}q}{\mathrm{d}p}.
\end{align*}
We see that $\Leg\mathcal{H}$ has a single slope $x=-q$ along $D:=\{p=0\}$, so that $D\subset\Delta(\Leg \mathcal{H}).$ Moreover, the map $\Gunderline_{\mathcal{H}}$ is given, for any $[x:y]\in\mathbb{P}^{1}_{ \mathbb{C}},$ by
$$\Gunderline_{\mathcal H}([x:y])=[-x^k(x-y)^k:(x-\lambda\,y)^k(x-\mu\,y)^k ].$$
In particular, the fiber $\Gunderline_{\mathcal H}^{-1}([0:1])$ consists of the two points $[0:1]$ and $[1:1]$: the point $[0:1]$ (resp.~$[1:1]$) is critical and fixed (resp.~non-fixed) for $\Gunderline_{\mathcal{H}}$ of multiplicity $k-1.$ From Remark~\ref{rem:indice-ramification-homogene}, we deduce that:
\begin{itemize}
\item If $k=2$ then the curvature of $\mathrm{Leg}\mathcal{H}$ is holomorphic on~$D.$
\smallskip

\item If $k>2$ then the curvature of  $\mathrm{Leg}\mathcal{H}$ is holomorphic on $D$ if and only if
$$0\equiv\mathrm{d}\omega\Big|_{y=x}=-k(\lambda-1)^{k-1}(\mu-1)^{k-1}x^{2k-1}(\lambda+\mu-2\lambda\,\mu)\mathrm{d}x\wedge\mathrm{d}y,$$
{\it i.e.} if and only if $\lambda$ and $\mu$ satisfy the equation $\lambda+\mu-2\lambda\,\mu=0.$
\end{itemize}
\end{eg}
\smallskip

\noindent To prove Theorem~\ref{thm:holomorphie-courbure-homogene} we need the following lemma.
\begin{lem}\label{lem:valeur-critique}
{\sl Let $f:\mathbb{P}^{1}_{\mathbb{C}}\rightarrow\mathbb{P}^{1}_{\mathbb{C}}$ be a rational map of degree $d$; $f(z)=\dfrac{a(z)}{b(z)}$ where $a$ and $b$ are polynomials without common factor and $\max(\deg a,\deg b)=d.$ Let $w_0\in\mathbb{C}$ and write $f^{-1}(w_0)=\{z_1,z_2,\ldots,z_n\}.$ Suppose that $z_i\neq\infty$ for all $i \in\{1,\ldots,n\}$ and let $\nu_i$ denote the ramification index of $f$ at the point $z_i.$ Then there exists $c\in\mathbb{C}^ {*}$ such that $a(z)=w_0b(z)+c\prod_{i=1}^{n}(z-z_i)^{\nu_i}.$
}
\end{lem}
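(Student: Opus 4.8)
The plan is to read off the fiber $f^{-1}(w_0)$ directly from the polynomial $a-w_0b$. Since $w_0\in\C$ and each $z_i\neq\infty$ is a finite point with $f(z_i)=w_0$ finite, I first observe that $b(z_i)\neq0$: were $b(z_i)=0$, the hypothesis $\gcd(a,b)=1$ would force $a(z_i)\neq0$ and hence $f(z_i)=\infty\neq w_0$, a contradiction. Consequently $z_i$ is a zero of the polynomial $P(z):=a(z)-w_0\,b(z)$, and conversely every finite zero of $P$ is a point of $f^{-1}(w_0)$.

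Next I identify the ramification index $\nu_i$ with the multiplicity of $z_i$ as a root of $P$. By definition $\nu_i$ is the order of vanishing of $f-w_0$ at $z_i$; writing $f-w_0=P/b$ and using $b(z_i)\neq0$, this order equals the multiplicity of $z_i$ as a root of $P$. Hence $\prod_{i=1}^{n}(z-z_i)^{\nu_i}$ divides $P$ in $\C[z]$, and the quotient $c(z):=P(z)/\prod_{i=1}^{n}(z-z_i)^{\nu_i}$ is a polynomial satisfying $c(z_i)\neq0$ for every $i$, since $z_i$ occurs in $P$ to the exact order $\nu_i$. This already gives the desired factorization $a=w_0\,b+c\prod_{i}(z-z_i)^{\nu_i}$; it remains only to see that $c$ is a nonzero constant.

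To conclude I would argue that $c$ has no zeros at all: any zero $z^{*}\in\C$ of $c$ would be a finite zero of $P$, hence an element of $f^{-1}(w_0)$, hence equal to some $z_j$; but $c(z_j)\neq0$, a contradiction. A polynomial over $\C$ without zeros is a nonzero constant, so $c\in\C^{*}$. Alternatively, the same conclusion follows from a degree count that makes the two standing hypotheses visible: the condition $z_i\neq\infty$ amounts to $f(\infty)\neq w_0$, which guarantees on the one hand that the full fiber of the degree-$d$ map lies among the $z_i$, so $\sum_i\nu_i=\deg\prod_i(z-z_i)^{\nu_i}=d$, and on the other hand that no cancellation of leading coefficients occurs in $P=a-w_0b$, so that $\deg P=d$ as well; comparing degrees forces $\deg c=0$.

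The content of the lemma is elementary, so there is no serious obstacle; the only point requiring care is the handling of the hypotheses. Specifically, one must use $\gcd(a,b)=1$ to secure $b(z_i)\neq0$, so that root multiplicities of $P$ genuinely compute ramification indices, and one must use $z_i\neq\infty$, in the form $f(\infty)\neq w_0$, precisely to prevent either the escape of ramification mass to infinity or a drop in the degree of $P$ in the boundary case $\deg a=\deg b=d$. With these checks in place the factorization follows at once.
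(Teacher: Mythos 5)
Your proof is correct, and it takes a genuinely different (and more self-contained) route than the paper. The paper invokes \cite[Lemma~3.9]{BM18Bull} as a black box: for each $i$ it provides a polynomial $\phi_i$ of degree $\leq d-\nu_i$ with $\phi_i(z_i)\neq0$ and $a=w_0b+\phi_i(z)(z-z_i)^{\nu_i}$, and then compares these $n$ identities pairwise to conclude that $(z-z_j)^{\nu_j}$ divides $\phi_i$ for $j\neq i$, forcing $\phi_i=c\prod_{j\neq i}(z-z_j)^{\nu_j}$. You instead work directly with $P=a-w_0b$, prove from $\gcd(a,b)=1$ that $b(z_i)\neq0$, identify the ramification index $\nu_i$ with the root multiplicity of $z_i$ in $P$ via $f-w_0=P/b$, and then kill the cofactor $c$ either by observing it has no zeros or by the degree count $\deg P=\sum\nu_i=d$ (both of your arguments for $c\in\C^*$ are valid, and your remark that the hypothesis $z_i\neq\infty$, i.e.\ $f(\infty)\neq w_0$, is exactly what prevents both a drop in $\deg P$ and an escape of ramification to infinity is the right way to see where the hypotheses enter). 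What your approach buys is independence from the earlier reference and a one-step global argument; what the paper's approach buys is brevity given that the per-point factorization was already established there. One microscopic point you leave implicit: $P\not\equiv0$, which follows at once from $\gcd(a,b)=1$ and $d\geq1$, and is needed before speaking of root multiplicities.
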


\begin{proof}[\sl Proof]
According to~\cite[Lemma~3.9]{BM18Bull}, for every $i\in\{1,\ldots,n\}$ there exists a polynomial  $\phi_i\in\mathbb{C}[z]$ of degree~$\leq d-\nu_i$ satisfying  $\phi_i(z_i)\neq0$ and such that $a(z)=w_0b(z)+\phi_i(z)(z-z_i)^{\nu_i}.$ This implies that for all~$i,j\in\{1,\ldots,n\},$ $\phi_i(z)(z-z_i)^{\nu_i}=\phi_j(z)(z-z_j)^{\nu_j}$, so that for any $j\neq i$, $(z-z_j)^{\nu_j}$ divides $\phi_i.$ As a result, $\phi_i\in\mathbb{C}[z]$ has degree $d-\nu_i$ and writes as $\phi_i(z)=c\hspace{-3.5mm}\prod\limits_{\hspace{3.5mm}j=1,j\neq i}^{n}(z-z_j)^{\nu_j}$ for some $c\in\C^*$, hence the statement is proved.
\end{proof}

\begin{proof}[\sl Proof of Theorem~\ref{thm:holomorphie-courbure-homogene}]
Let $\delta\in\C$ be such that $b_i-a_i\delta\neq0$ for all $i=1,\ldots,n$. Up to conjugating $\omega$ by the linear transformation $(x+\delta\,y,y)$, we can assume that none of the lines $L_i=(b_i\hspace{0.2mm}y-a_i\hspace{0.2 mm}x=0)$ is vertical, {\it i.e.}  $b_i\neq0$ for all $i=1,\ldots,n.$ Setting $r_i:=\frac{a_i}{b_i} $ we have $\Gunderline_{\mathcal H}^{-1}(p_0)=\{r_1,\ldots,r_n\}$ with $\Gunderline_{\mathcal H}(z)=-\dfrac{A(1 ,z)}{B(1,z)}.$ According to Lemma~\ref{lem:valeur-critique}, there exists a constant $c\in\C^*$ such that $$-A (1,z)=p_0B(1,z)-c\prod_{i=1}^{n}(z-r_i)^{\nu_i}.$$ Moreover, the $d$-web $\Leg \mathcal{H}$ is given by the equation~(\ref{equa:LegH}); since $A,B\in\C[x,y]_d,$ this equation can then be rewritten as
\begin{align*}
0=x^d\left[A(1,p-\tfrac{q}{x})+pB(1,p-\tfrac{q}{x})\right]=x^d\Big[(p-p_0)B(1,p-\tfrac{q}{x})+c\prod_{i=1}^{n}(p-\tfrac{q}{x}-r_i)^{\nu_i}\Big], \qquad \text{with}\hspace{1mm} x=\frac{\mathrm{d}q}{\mathrm{d}p}.
\end{align*}
Set $\check{x}:=q$,\, $\check{y}:=p-p_0$\, and \,$\check{p}:=\dfrac{\mathrm{d}\check{ y}}{\mathrm{d}\check{x}}=\dfrac{1}{x}$; in these new coordinates $D=\{\check{y}=0\}$ and $\Leg\mathcal{H}$ is described by the differential equation
\begin{align*}
F(\check{x},\check{y},\check{p}):=\check{y}B(1,\check{y}+p_0-\check{p}\check{x})+c\prod_{i=1}^{n}(\check{y}+p_0-\check{p}\check{x}-r_i)^{\nu_i}=0.
\end{align*}
We have $F(\check{x},0,\check{p})=c(-\check{x})^d\prod_{i=1}^{n}(\check{p}-\varphi_{i}(\check{x}))^{\nu_i},$ where $\varphi_{i}(\check{x})=\frac{p_0-r_i}{\check{x}}.$ Note that if $\nu_i\geq2$ then $\partial_{\check{y}}F\big(\check{x},0,\varphi_{i}(\check{x})\big)=B(1,r_i)\neq0$; since $\partial_{\check{p}}F\big(\check{x},0,\varphi_{i}(\check{x})\big)\not\equiv0$ if $\nu_i=1,$ it follows that the surface $S_{\Leg\mathcal{H}}$ is smooth along $D=\{\check{y}=0\}.$ Furthermore, if $\nu_i\geq3$ then $\partial_{\check{p}}\partial_{\check{y}}F\big(\check{x},0,\varphi_{i}(\check{x})\big)=-\check{x}\partial_{y}B(1,r_i).$ Thus, by Theorem~\ref{thm-critere-holomorphie-kw}, the~curvature of $\Leg\mathcal{H}$ is holomorphic on $D=\{\check{y}=0\}$ if and only if $\sum_{i=1}^{n}(\nu_{i}-1)\varphi_{i}(\check{x})\psi_{i}\equiv0$ where, for all~$i\in\{1,\ldots,n\}$ such that $\nu_i\geq2,$
\begin{align*}
\psi_{i}=\frac{1}{\nu_{i}}
\left[
(\nu_{i}-2)\left(d+(p_0-r_i)\frac{\partial_{y}B(1,r_i)}{B(1,r_i)}\right)
+2(\nu_{i}+1)\hspace{-3.5mm}\sum\limits_{\hspace{3.5mm}j=1,j\neq i}^{n}\frac{\nu_{j}(p_0-r_j)}{r_i-r_j}
\right].
\end{align*}

\noindent We note that
\[
\hspace{-3.5mm}\sum\limits_{\hspace{3.5mm}j=1,j\neq i}^{n}\frac{\nu_{j}(p_0-r_j)}{r_i-r_j}
=d-\nu_i+(p_0-r_i)\hspace{-3.5mm}\sum\limits_{\hspace{3.5mm}j=1,j\neq i}^{n}\frac{\nu_j}{r_i-r_j}
=d-\nu_i+(p_0-r_i)\frac{f_{i}^{'}(r_i)}{f_{i}(r_i)},
\]
where
$
f_{i}(z):=c\hspace{-3.5mm}\prod\limits_{\hspace{3.5mm}j=1,j\neq i}^{n}(z-r_j)^{\nu_j}
=\dfrac{A(1,z)+p_0B(1,z)}{(z-r_i)^{\nu_i}}
=\dfrac{P_i(1,z;r_i,1)}{B(1,r_i)}.
$
Therefore
\begin{align*}
&\hspace{1cm}\hspace{-3.5mm}\sum\limits_{\hspace{3.5mm}j=1,j\neq i}^{n}\frac{\nu_{j}(p_0-r_j)}{r_i-r_j}
=d-\nu_i+(p_0-r_i)\frac{\partial_{y}P_i(1,r_i;r_i,1)}{P_i(1,r_i;r_i,1)}
=\frac{\left|\begin{array}{cc}
\partial_{x}P_i(1,r_i;r_i,1) &  A(1,r_i)
\vspace{2mm}
\\
\partial_{y}P_i(1,r_i;r_i,1) &  B(1,r_i)
\end{array} \right|}{B(1,r_i)P_i(1,r_i;r_i,1)},
\end{align*}
because $p_0=\Gunderline_{\mathcal H}(r_i)=-\dfrac{A(1,r_i)}{B(1,r_i)}$\, and \,$(d-\nu_i)P_i(1,r_i;r_i,1)=\partial_{x}P_i(1,r_i;r_i,1)+r_i\partial_{y}P_i(1,r_i;r_i,1)$ (\textsc{Euler}'s identity).

\noindent On the other hand, let us fix $i\in\{1,\ldots,n\}$ such that $\nu_i\geq2$; from the equalities $p_0=\Gunderline_{\mathcal H}(r_i)$ and $\Gunderline_{\mathcal H}^{'}(r_i)=0$ we deduce that $p_0\partial_{y}B(1,r_i)=-\partial_{y}A(1,r_i),$ so that
\[
dB(1,r_i)+(p_0-r_i)\partial_{y}B(1,r_i)=dB(1,r_i)-r_i\partial_{y}B(1,r_i)-\partial_{y}A(1,r_i)
=\partial_{x}B(1,r_i)-\partial_{y}A(1,r_i),
\]
thanks to \textsc{Euler}'s identity.
\smallskip

\noindent It follows that for all $i\in\{1,\ldots,n\}$ such that $\nu_i\geq2,$ $\psi_{i}=\dfrac{Q_{i}(1,r_i;r_i ,1)}{\nu_{i}P_{i}(1,r_i;r_i,1)B(1,r_i)}.$ As a consequence, $K(\Leg\mathcal{H})$~is~ holomorphic on $D=\{\check{y}=0\}$ if~and~only~if
\begin{align*}
\frac{1}{\check{x}}\sum_{i=1}^{n}\left(1-\frac{1}{\nu_{i}}\right)\frac{(p_0-r_i)Q_{i}(1,r_i;r_i,1)}{P_{i}(1,r_i;r_i,1)B(1,r_i)}\equiv0,
\end{align*}
which ends the proof of the theorem.
\end{proof}

\begin{cor}\label{cor:holomorphie-droite-inflex-nu-1}
{\sl Let $\mathcal{H}$ be a homogeneous foliation of degree $d\geq3$ on $\pp$ defined by the $1$-form $$\omega=A(x,y)\mathrm{d}x+B(x,y)\mathrm{d}y,\quad A,B\in\mathbb{C}[x,y]_d, \hspace{2mm}\gcd(A,B)=1.$$ Assume that $\mathcal{H}$ possesses a transverse inflection line $T=(ax+by=0)$ of order $\nu-1$. Suppose moreover that $[-a:b]\in\mathbb{P}^{1}_{\mathbb{C}}$ is the only non-fixed critical point of $\Gunderline_{\mathcal H}$ in its fiber $\Gunderline_{\mathcal H}^{-1}(\Gunderline_{\mathcal H}([-a:b])).$ Then the curvature of $\mathrm{Leg}\mathcal{H}$ is holomorphic on $T'=\mathcal{G}_{\mathcal{H}}(T)$ if and only if $Q(b,-a\hspace{0.2mm};a,b)=0$,~where
\begin{Small}
\begin{align*}
Q(x,y;a,b):=(\nu-2)\left(\dfrac{\partial{B}}{\partial{x}}-\dfrac{\partial{A}}{\partial{y}}\right)P(x,y;a,b)+2(\nu+1)
\left|\begin{array}{cc}
\dfrac{\partial{P}}{\partial{x}} &  A(x,y)
\vspace{2mm}
\\
\dfrac{\partial{P}}{\partial{y}} &  B(x,y)
\end{array} \right|
\quad{\fontsize{11}{11pt}\text{and}}\quad
P(x,y;a,b):=\frac{\left|
\begin{array}{cc}
A(x,y)  &  A(b,-a)
\\
B(x,y)  &  B(b,-a)
\end{array}
\right|}{(ax+by)^{\nu}}.
\end{align*}
\end{Small}
}
\end{cor}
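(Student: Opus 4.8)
The plan is to obtain this statement as a direct specialization of Theorem~\ref{thm:holomorphie-courbure-homogene}, the only genuine work being the translation between the two systems of notation. First I would recast the geometric hypotheses in the language of that theorem. Since $T=(ax+by=0)$ is a transverse inflection line of order $\nu-1$, it occurs in $\ItrH=\prod_i T_i^{n_i}$ as one of the factors $T_i=(b_iy-a_ix=0)$; matching $T$ with $b_iy-a_ix=0$ forces $a_i=-a$ and $b_i=b$, so that $T$ corresponds to the non-fixed critical point $[-a:b]\in\mathbb{P}^1_{\mathbb{C}}$ of $\Gunderline_{\mathcal{H}}$, of multiplicity $n_i=\nu-1$ and hence of ramification index $\nu_i=\nu\geq2$.

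Next I would identify the component $T'=\mathcal{G}_{\mathcal{H}}(T)$ with a vertical line $\{p=p_0\}$. Parametrising $T$ by $(x,y)=t(b,-a)$ and using that $A,B$ are homogeneous of degree $d$, the slope $p=-A/B$ of the tangent line is constant equal to $p_0:=-A(b,-a)/B(b,-a)=\Gunderline_{\mathcal{H}}([-a:b])$, while $q$ runs over all of $\mathbb{C}$; thus $T'=\{p=p_0\}$, which lies in $\mathcal{G}_{\mathcal{H}}(\ItrH)\subset\Delta(\Leg\mathcal{H})\setminus\check O$, so Theorem~\ref{thm:holomorphie-courbure-homogene} applies to $D=T'$. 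The fibre $\Gunderline_{\mathcal{H}}^{-1}([p_0:1])$ then splits into the points with $\nu_i=1$, the fixed critical points, and the single non-fixed critical point $[-a:b]$ guaranteed by hypothesis.

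The heart of the argument is to show that in the sum of Theorem~\ref{thm:holomorphie-courbure-homogene} only the term attached to $[-a:b]$ survives. Each point with $\nu_i=1$ contributes $0$ because of the factor $1-\tfrac{1}{\nu_i}$, and each fixed critical point contributes $0$ because a fixed point lying in the fibre over $[p_0:1]$ satisfies $[a_i:b_i]=[p_0:1]$, i.e. $p_0b_i-a_i=0$ (exactly the vanishing recorded in the remark following the theorem). A direct comparison of the definitions shows that $P_i(x,y;-a,b)=P(x,y;a,b)$ and $Q_i(x,y;-a,b)=Q(x,y;a,b)$, so the surviving term equals $\bigl(1-\tfrac{1}{\nu}\bigr)\dfrac{(p_0b+a)\,Q(b,-a;a,b)}{P(b,-a;a,b)\,B(b,-a)}$.

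Finally I would check that every factor multiplying $Q(b,-a;a,b)$ is nonzero, so that the vanishing of the whole sum is equivalent to $Q(b,-a;a,b)=0$: the factor $1-\tfrac{1}{\nu}$ is nonzero since $\nu\geq2$; the factor $p_0b+a=p_0b_i-a_i$ is nonzero precisely because $[-a:b]$ is a non-fixed critical point; and $P(b,-a;a,b)\neq0$ together with $B(b,-a)\neq0$ follow from the same computations as in the proof of Theorem~\ref{thm:holomorphie-courbure-homogene}, where one has $B(1,r_i)\neq0$ and $P_i(1,r_i;r_i,1)\neq0$ for every critical point with $\nu_i\geq2$. I expect the only delicate point to be the bookkeeping of conventions — the sign swap $(a_i,b_i)=(-a,b)$ relating $ax+by=0$ to $b_iy-a_ix=0$, together with the $[y:x]$ convention for $\Gunderline_{\mathcal{H}}$ — rather than any new computation.
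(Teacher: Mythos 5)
Your proposal is correct and is exactly the intended derivation: the paper states this corollary as an immediate specialization of Theorem~\ref{thm:holomorphie-courbure-homogene}, and your reduction --- matching $T=(ax+by=0)$ with the non-fixed critical point $[-a:b]$ of ramification index $\nu$, discarding the terms with $\nu_i=1$ via the factor $1-\tfrac{1}{\nu_i}$ and the fixed critical points via $p_0b_i-a_i=0$, and then checking that the remaining prefactors $1-\tfrac{1}{\nu}$, $p_0b+a$, $P(b,-a;a,b)$ and $B(b,-a)$ are nonzero --- is precisely the argument the authors leave implicit. The bookkeeping of conventions (the identification $(a_i,b_i)=(-a,b)$ and the $[y:x]$ convention for $\Gunderline_{\mathcal H}$) is handled correctly.
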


\begin{rem}\label{rem:holomorphie-droite-inflex-minimale-maximale}
When the line $T=(ax+by=0)$ is of minimal inflection order $1$ ({\it i.e.} if $\nu=2$) and under the more restrictive hypothesis that the point $[-a:b]$ is the only critical point of $\Gunderline_{\mathcal H}$ in its fiber, we recover \cite[Theorem~3.5]{BM18Bull}. When $T$ is of maximal inflection order $d-1$ ({\it i.e.} if $\nu=d$), we recover~\cite[Theorem~3.8]{BM18Bull}.
\end{rem}


\section{\textsc{Galois} homogeneous foliations having a flat \textsc{Legendre} transform}\label{sec:feuill-homog-Galois-plat}
\vspace{2mm}

\noindent Following~\cite[Definition~6.16]{BFMN16} a foliation $\F$ of degree $d$ on $\pp$ is said to be \textsc{Galois} if there is a \textsc{Zariski} open subset $U$ of $\pp$ such that the \textsc{Gauss} map $\mathcal{G}_{\mathcal{F}}\hspace{1mm}\colon\pp\dashrightarrow \pd$, defined by $m\not\in\Sing\F\mapsto\mathrm{T}^{\mathbb{P}}_{m}\F$, induces a \textsc{Galois} covering from $U$ onto $\mathcal{G}_{\mathcal{F}}(U)$, necessarily of degree $d$. This is equivalent to the existence of a subgroup $G$ of order $d$ of the group $\mathrm{Bir}(\pp)$ of birational transformations of $\pp$ such that for all $\gamma\in G$, we have $\mathcal{G}_{\mathcal{F}}\circ\gamma=\mathcal{G}_{\mathcal{F}}.$

\noindent In particular, if $\F$ is homogeneous, then its associated map $\Gunderline_{\mathcal{F}}\hspace{1mm}\colon\mathbb{P}^{1}_{\mathbb{C}}\rightarrow \mathbb{P}^{1}_{\mathbb{C}}$ is a ramified covering of degree~$d$. Moreover, $\F$ is \textsc{Galois} if and only if $\Gunderline_{\mathcal{F}}$ is \textsc{Galois} (\cite[Proposition~6.19]{BFMN16}), or equivalently, if~and~only~if $\Gunderline_{\mathcal{F}}$ has the same ramification indices at all the points of the same fiber (\cite[Theorem~A]{BFMN16}).
\smallskip

\noindent Let us recall the following result classifying the ramified \textsc{Galois} coverings of the \textsc{Riemann} sphere by itself, due to \textsc{Klein}~\cite[Part I,~Chapter~II]{Kle03} (\emph{see} also \cite[Theorem~4.18]{BFMN16}).
\begin{thm}\label{thm:Klein}
{\sl Let $f:\mathbb{P}^{1}_{\mathbb{C}}\rightarrow\mathbb{P}^{1}_{\mathbb{C}}$ be a ramified \textsc{Galois} covering of degree $d. $ Up to the left-right-action of $(\mathrm{Aut}(\mathbb{P}^{1}_{\mathbb{C}}))^2$, $f$ is of one of the following types
\begin{itemize}
\item [\texttt{1.}] $f_1=z^d$;
\smallskip
\item [\texttt{2.}] $f_2=\frac{(z^k+1)^2}{4z^k}$ if $d$ is even, $d=2k$;
\smallskip
\item [\texttt{3.}] $f_3=\left(\frac{z^4+2\mathrm{i}\sqrt{3}z^2+1}{z^4-2\mathrm{i}\sqrt{3}z^2+1}\right)^3$ if $d=12$;
\smallskip
\item [\texttt{4.}] $f_4=\frac{\left(z^8+14z^4+1\right)^3}{108z^4\left(z^4-1\right)^4}$ if $d=24$;
\smallskip
\item [\texttt{5.}] $f_5=\frac{\left(z^{20}-228z^{15}+494z^{10}+228z^5+1\right)^3}{-1728z^5\left(z^{10}+11z^5-1\right)^5}$ if $d=60$.
\end{itemize}
\smallskip

\noindent Moreover, the \textsc{Galois} group of $f$ is cyclic if and only if $f$ is left-right conjugate to $f_1.$}
\end{thm}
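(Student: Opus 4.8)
The plan is to reduce the statement to the classical classification of finite subgroups of $\mathrm{Aut}(\sph)=\mathrm{PGL}_2(\C)$. First I would recall that a ramified \textsc{Galois} covering $f\colon\sph\to\sph$ of degree $d$ is, by definition, one for which the field extension $\C(z)/\C(f)$ is \textsc{Galois}; its \textsc{Galois} group $G$ coincides with the group of deck transformations and has order $d$. Since $G$ acts on the source $\sph$ by automorphisms, the quotient $\sph/G$ has genus $0$ by \textsc{Riemann}--\textsc{Hurwitz} (a nontrivial ramification term forces $2g'-2<0$), hence $\sph/G\cong\sph$ and $f$ is identified with the quotient map $\sph\to\sph/G$. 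Two such coverings are equivalent under the left-right action of $(\mathrm{Aut}(\sph))^2$ exactly when their deck groups are conjugate in $\mathrm{PGL}_2(\C)$. Thus the problem becomes the classification of finite subgroups $G\subset\mathrm{PGL}_2(\C)$ up to conjugacy, together with the explicit computation of a degree-$d$ generator of the invariant field $\C(z)^G$.

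Next I would perform that classification by applying \textsc{Riemann}--\textsc{Hurwitz} to $\pi\colon\sph\to\sph/G$, with $N:=|G|$. Writing the branch points as $Q_1,\dots,Q_k$ with common ramification index $e_j$ over $Q_j$, the formula reads $\sum_{j=1}^{k}\bigl(1-\tfrac1{e_j}\bigr)=2-\tfrac2N$. Since each summand is at least $\tfrac12$ and the total is strictly less than $2$, one obtains $k\le 3$, while $k\le1$ is excluded because it would force $\tfrac1{e_1}=\tfrac2N-1\le0$. For $k=2$ the only possibility is $e_1=e_2=N$, i.e.\ $G$ is cyclic. For $k=3$ the condition $\tfrac1{e_1}+\tfrac1{e_2}+\tfrac1{e_3}=1+\tfrac2N>1$ has exactly the solutions $(2,2,m)$, $(2,3,3)$, $(2,3,4)$, $(2,3,5)$, yielding respectively $N=2m$ (dihedral), $N=12$ (tetrahedral $A_4$), $N=24$ (octahedral $S_4$) and $N=60$ (icosahedral $A_5$); a rigidity argument for the associated triangle-group representation shows each signature is realized by a single conjugacy class.

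Then, for each group $G$ in the list, I would exhibit an explicit degree-$d$ generator of $\C(z)^G$, which up to the left-right action is the desired normal form. For $G=\langle z\mapsto\zeta_d z\rangle$ one takes $f_1=z^d$. For the dihedral group generated by $z\mapsto\zeta_k z$ and $z\mapsto 1/z$, symmetrizing produces an invariant of degree $2k$ which, after \textsc{M\"obius} normalization, is $f_2=\frac{(z^k+1)^2}{4z^k}$. For the three exceptional groups I would use \textsc{Klein}'s vertex/edge/face forms of the corresponding regular solid: suitable ratios of powers of these forms are $G$-invariant of degree $d$, and after placing the three branch values at $0,1,\infty$ they reproduce $f_3,f_4,f_5$. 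Finally, since the \textsc{Galois} group of $f$ is $G$ itself, and the dihedral groups (of order $2k$ with $k\ge2$), $A_4$, $S_4$ and $A_5$ are all non-abelian whereas $\Z/d$ is cyclic, the last assertion follows: $f$ has cyclic \textsc{Galois} group if and only if it is left-right conjugate to $f_1$.

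The main obstacle is twofold. The genuinely computational step is the determination of the invariant generators in the three exceptional cases, above all the icosahedral one, where one must construct \textsc{Klein}'s degree $12$, $20$ and $30$ forms and check that the precise combinations in $f_3,f_4,f_5$ carry the exact normalizing constants (such as the $108$ and the $-1728$). The more structural difficulty is the uniqueness up to conjugacy for each signature: realizability of each group is straightforward, but showing that two subgroups sharing a signature are conjugate in $\mathrm{PGL}_2(\C)$ rests on the rigidity of the triangle-group monodromy, which is the core of \textsc{Klein}'s original analysis.
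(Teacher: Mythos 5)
The paper offers no proof of this statement: it is quoted as \textsc{Klein}'s classical classification of ramified \textsc{Galois} coverings of the sphere, with \cite{Kle03} and \cite[Theorem~4.18]{BFMN16} as references, and your outline is precisely the standard argument found in those sources --- identify $f$ with the quotient by its deck group, classify the finite subgroups of $\mathrm{PGL}_2(\C)$ via \textsc{Riemann}--\textsc{Hurwitz} on the signatures $(e_1,\dots,e_k)$, and exhibit degree-$d$ generators of the invariant fields, with the rigidity of the triangle signatures giving uniqueness up to conjugacy. The plan is sound; the only slip is the parenthetical claim that every dihedral group of order $2k$ with $k\ge2$ is non-abelian (for $k=2$ it is $\Z/2\times\Z/2$, which is abelian), but since that group is still non-cyclic the final assertion is unaffected.
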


\begin{defin}
\noindent Let $f:\mathbb{P}^{1}_{\mathbb{C}}\rightarrow\mathbb{P}^{1}_{\mathbb{C}}$ be a rational map of degree $d.$ We call {\sl associated foliation} to~$f$ the homogeneous foliation $\mathscr{H}(f)$ of $\pp$ whose associated rational map $\Gunderline_{\mathscr{H}(f )}$ is precisely $f.$
\end{defin}

\noindent Note that if $f$ is defined by $f([x:y])=[A(x,y):B(x,y)],$ where $A,B\in\mathbb{C}[x ,y]_d$\, and $\mathrm{gcd}(A,B)=1,$ then $\mathscr{H}(f)$ is given by the $1$-form $\omega=A(y, x)\mathrm{d}x-B(y,x)\mathrm{d}y.$
\smallskip

\noindent Theorem~\ref{thm:Klein} translates in terms of homogeneous foliations as follows:
\begin{thm}
{\sl Let $\mathcal{H}$ be a \textsc{Galois} homogeneous foliation on $\pp.$ Then there exist $i\in\{1,\ldots,5\}$ and $\ell,\rho\in\mathrm{Aut}(\mathbb{P}^{1}_{\mathbb{C}})$~such~that~$\mathcal{H}=\mathscr{H}(\ell\circ f_i\circ\rho).$
}
\end{thm}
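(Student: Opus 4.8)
The plan is to obtain the statement as a direct transcription of \textsc{Klein}'s classification (Theorem~\ref{thm:Klein}) through the correspondence between homogeneous foliations and rational self-maps of $\mathbb{P}^1_{\mathbb{C}}$ recorded above. The only point needing real care is that this correspondence, $\mathcal{H}\mapsto\Gunderline_{\mathcal{H}}$, is a genuine bijection, so that an equality of associated maps forces an equality of foliations.

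First I would use the fact, recalled at the beginning of this section, that since $\mathcal{H}$ is a \textsc{Galois} homogeneous foliation, its associated map $\Gunderline_{\mathcal{H}}\colon\mathbb{P}^1_{\mathbb{C}}\to\mathbb{P}^1_{\mathbb{C}}$ is a ramified \textsc{Galois} covering of degree $d$ (by \cite[Proposition~6.19]{BFMN16}). Applying Theorem~\ref{thm:Klein} to $f:=\Gunderline_{\mathcal{H}}$ then produces an index $i\in\{1,\ldots,5\}$ and automorphisms $\ell,\rho\in\mathrm{Aut}(\mathbb{P}^1_{\mathbb{C}})$ such that $\Gunderline_{\mathcal{H}}=\ell\circ f_i\circ\rho$.

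Next I would invoke the definition of the associated foliation. Writing a degree-$d$ rational map as $g([x:y])=[A(x,y):B(x,y)]$ with $A,B\in\mathbb{C}[x,y]_d$ and $\gcd(A,B)=1$, the foliation $\mathscr{H}(g)$ is given by $\omega=A(y,x)\mathrm{d}x-B(y,x)\mathrm{d}y$; a direct computation from the formula $\Gunderline_{\mathcal{H}}([y:x])=[-A(x,y):B(x,y)]$ then yields $\Gunderline_{\mathscr{H}(g)}=g$. Taking $g=\ell\circ f_i\circ\rho$, this gives $\Gunderline_{\mathscr{H}(\ell\circ f_i\circ\rho)}=\ell\circ f_i\circ\rho=\Gunderline_{\mathcal{H}}$.

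It remains to close the argument by showing that a homogeneous foliation is determined by its associated map. If $\mathcal{H}$ is defined by $\omega=A\,\mathrm{d}x+B\,\mathrm{d}y$ with $A,B\in\mathbb{C}[x,y]_d$ and $\gcd(A,B)=1$, then $\Gunderline_{\mathcal{H}}$ recovers the ratio $A:B$ of coprime homogeneous polynomials of degree $d$, hence $A$ and $B$ up to a common nonzero scalar, hence $\omega$ up to scalar, hence $\mathcal{H}$ itself. Therefore the equality $\Gunderline_{\mathcal{H}}=\Gunderline_{\mathscr{H}(\ell\circ f_i\circ\rho)}$ forces $\mathcal{H}=\mathscr{H}(\ell\circ f_i\circ\rho)$, as claimed. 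The only substantive step is this injectivity of $\mathcal{H}\mapsto\Gunderline_{\mathcal{H}}$; everything else is a formal restatement of Theorem~\ref{thm:Klein}. I expect the sole technical care to lie in the degree and coprimality bookkeeping upon dehomogenizing $A,B$ into the rational function $-A(1,t)/B(1,t)$, making sure no cancellation drops the degree below $d$ — which is exactly ensured by $\gcd(A,B)=1$ together with $\mathcal{H}$ being \textsc{Galois} of degree $d$.
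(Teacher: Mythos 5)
Your proposal is correct and is precisely the argument the paper leaves implicit: the paper states this theorem as an immediate translation of Klein's classification (Theorem~\ref{thm:Klein}) via the correspondence $\mathcal{H}\mapsto\Gunderline_{\mathcal{H}}$ and offers no written proof. Your filling in of the two details that make the translation work --- that $\Gunderline_{\mathscr{H}(g)}=g$ by the defining formula, and that $\mathcal{H}\mapsto\Gunderline_{\mathcal{H}}$ is injective because coprimality of $A$ and $B$ recovers $\omega$ up to a nonzero scalar --- is accurate and complete.
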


\noindent The following theorem is the main result of this section.
\begin{thm}\label{thm:feuill-homog-Galois-plat}
{\sl Let $\mathcal{H}$ be a \textsc{Galois} homogeneous  foliation of degree $d\geq3$ on $\pp.$ Denote by $\mathrm{Gal}(\Gunderline_{\mathcal H})$ the \textsc{Galois} group of the covering $\Gunderline_{\mathcal H}.$ We have the following dichotomy:
\begin{itemize}
\item If $\mathrm{Gal}(\Gunderline_{\mathcal H})$ is cyclic, then the $d$-web $\Leg\mathcal{H}$ is flat if and only if $\mathcal{H} $ is linearly conjugate to one of the two foliations $\mathcal{H}_{1}^{d}$ and $\mathcal{H}_{2}^{d}$ defined respectively by the $1$-forms
\begin{align*}
\omega_1^{\hspace{0.2mm}d}=y^d\mathrm{d}x-x^d\mathrm{d}y
\qquad\qquad\text{and}\qquad\qquad
\omega_2^{\hspace{0.2mm}d}=x^d\mathrm{d}x-y^d\mathrm{d}y.
\end{align*}
\item If $\mathrm{Gal}(\Gunderline_{\mathcal H})$ is non-cyclic, then the $d$-web $\Leg\mathcal{H}$ is flat.
\end{itemize}
}
\end{thm}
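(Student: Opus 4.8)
The plan is to reduce flatness to a finite list of holomorphy conditions and then feed the \textsc{Klein} classification (Theorem~\ref{thm:Klein}) into the criterion of Theorem~\ref{thm:holomorphie-courbure-homogene}. First recall, from the remark following Theorem~\ref{thm:holomorphie-courbure-homogene} together with \cite[Lemma~3.1]{BFM13}, that $\Leg\mathcal{H}$ is flat if and only if its curvature is holomorphic along every irreducible component of $\G_{\mathcal{H}}(\ItrH)$, that is, along $\{p=p_0\}$ for each critical value $p_0$ of $\Gunderline_{\mathcal H}$ whose fibre contains a \emph{non-fixed} critical point (the summand attached to a fixed critical point vanishes, since there $p_0b_i-a_i=0$). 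Because $\mathcal{H}$ is \textsc{Galois}, $\Gunderline_{\mathcal H}$ is a \textsc{Galois} covering, so by \cite[Theorem~A]{BFMN16} the ramification indices in any fibre are all equal to a common $\nu\geq2$; and by Theorem~\ref{thm:Klein} we may, after a linear conjugation of $\mathcal{H}$ (which acts on $\Gunderline_{\mathcal H}$ by conjugation, and hence absorbs the right factor $\rho$), assume $\Gunderline_{\mathcal H}=\ell\circ f_i$ for some $\ell\in\mathrm{Aut}(\mathbb{P}^1_{\mathbb{C}})$, with $i=1$ precisely when the \textsc{Galois} group is cyclic.

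In the cyclic case $\Gunderline_{\mathcal H}=\ell(z^d)$ has its only critical points at $0$ and $\infty$, both totally ramified of index $d$, so every ramified fibre is a single point and the relevant condition is that of Remark~\ref{rem:indice-ramification-homogene} (see also Corollary~\ref{cor:holomorphie-droite-inflex-nu-1}): along the inflection line $T$ of a non-fixed totally ramified critical point ($\nu=d\geq3$) the curvature is holomorphic if and only if $\mathrm{d}\omega$ vanishes on $T$. Using the residual conjugations $z\mapsto\lambda z$ and $z\mapsto\lambda/z$ I would normalise $\ell$ according to the dynamics of $\{0,\infty\}$. When both points are fixed one gets $\Gunderline_{\mathcal H}=z^d$, i.e. $\mathcal{H}\simeq\mathcal{H}_1^d$; then $\ItrH$ is empty and $\Leg\mathcal{H}$ is flat. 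When the two points are interchanged one gets $\Gunderline_{\mathcal H}=z^{-d}$, i.e. $\mathcal{H}\simeq\mathcal{H}_2^d$; since $\mathrm{d}\omega_2^d\equiv0$ the condition $\mathrm{d}\omega|_T=0$ holds for both non-fixed critical points and $\Leg\mathcal{H}$ is again flat. In the remaining (mixed) normal forms there is a non-fixed totally ramified critical point for which an explicit computation of $\omega$ gives, e.g. for $\Gunderline_{\mathcal H}=\tfrac{wz^d}{z^d+ew}$, the form $\omega=-wy^d\mathrm{d}x+(y^d+ew\,x^d)\mathrm{d}y$ with $\mathrm{d}\omega|_{x=0}=wd\,y^{d-1}\mathrm{d}x\wedge\mathrm{d}y\not\equiv0$; hence $\Leg\mathcal{H}$ is not flat. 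This yields the stated dichotomy $\mathcal{H}\simeq\mathcal{H}_1^d$ or $\mathcal{H}\simeq\mathcal{H}_2^d$.

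In the non-cyclic case $\Gunderline_{\mathcal H}=\ell\circ f_i$ with $i\in\{2,3,4,5\}$; here no fibre is totally ramified at a single point, so every ramified fibre has at least two points of common index $\nu\in\{2,3,4,5\}$. Fibres with $\nu=2$ impose nothing (Remark~\ref{rem:indice-ramification-homogene}, part~1). For a fibre with $\nu\geq3$, Remark~\ref{rem:indice-ramification-homogene} reduces holomorphy to $\sum_i\frac{(p_0b_i-a_i)\big(\partial_xB(b_i,a_i)-\partial_yA(b_i,a_i)\big)}{B(b_i,a_i)}=0$, the sum running over the non-fixed points of the fibre. Using $p_0=\Gunderline_{\mathcal H}(r_i)$, $\Gunderline_{\mathcal H}'(r_i)=0$ and \textsc{Euler}'s identity (as in the proof of Theorem~\ref{thm:holomorphie-courbure-homogene}) each summand rewrites in terms of the values of $B$ and $\partial_yB$ at the fibre points, and I would then invoke the symmetry of the fibre, which for a \textsc{Galois} covering is a single orbit of the finite group $\mathrm{Gal}(\Gunderline_{\mathcal H})\subset\mathrm{Aut}(\mathbb{P}^1_{\mathbb{C}})$. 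Concretely, for the dihedral $f_2$ the involution $z\mapsto1/z$ makes $A(1,z),B(1,z)$ palindromic, which forces the coefficient of $z$ in $B(1,z)$ (and its counterpart at $z=\infty$) to vanish and makes the sum over $\{0,\infty\}$ collapse to $0$; the analogous tetrahedral, octahedral and icosahedral symmetries should dispose of $f_3,f_4,f_5$. This gives flatness of $\Leg\mathcal{H}$ for every non-cyclic type and every $\ell$.

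The main obstacle is precisely this last step. Two features make it delicate: the condition must be checked \emph{uniformly in the modulus} $\ell$, because linear conjugation only realises the diagonal of $\mathrm{Aut}(\mathbb{P}^1_{\mathbb{C}})\times\mathrm{Aut}(\mathbb{P}^1_{\mathbb{C}})$, so one cannot simply normalise $\Gunderline_{\mathcal H}$ to the bare \textsc{Klein} model $f_i$; and the vanishing of the sum must be secured for each of the four non-cyclic types and each of their higher-index fibres, keeping track of the contributions of $B$ at the fibre points. A clean way to organise the computation is to read the sum as a sum of residues on $\mathbb{P}^1_{\mathbb{C}}$ of a rational $1$-form built from $\Gunderline_{\mathcal H}$ and $B$, so that the vanishing follows from the invariance of the whole configuration (the fibre and the zero locus of $B$, both $\mathrm{Gal}(\Gunderline_{\mathcal H})$-orbits) under the relevant \textsc{Platonic} symmetry group; carrying this through is where the real work lies.
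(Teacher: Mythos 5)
Your skeleton matches the paper's: flatness is equivalent to holomorphy of $K(\Leg\mathcal{H})$ along $\mathcal{G}_{\mathcal{H}}(\ItrH)$ by \cite[Lemma~3.1]{BFM13} and \cite[Theorem~3.1]{BM18Bull}, the \textsc{Galois} hypothesis forces a common ramification index in each fibre, and \textsc{Klein}'s list reduces everything to foliations $\mathscr{H}(h\circ f_i)$ with $h$ an arbitrary M\"obius modulus. But in the non-cyclic case there is a genuine gap, and you name it yourself: the vanishing of the sum in Remark~\ref{rem:indice-ramification-homogene} must be established \emph{uniformly in the modulus} $h$, and your proposed mechanisms (palindromy of $A(1,z),B(1,z)$ for $f_2$, ``analogous'' Platonic symmetries for $f_3,f_4,f_5$, or a residue-theoretic repackaging) remain a plan --- ``carrying this through is where the real work lies'' is precisely the step that is missing. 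The paper's resolution is Lemma~\ref{lem:holomorphie-courbure-Leg-Hh}: writing $h(z)=\frac{\alpha z+\beta}{\gamma z+\delta}$ and using that at a critical point $[a_i:b_i]$ over $p_0\in\C$ one has $A=p_0B$, $\partial_xA=p_0\partial_xB$, $\partial_yA=p_0\partial_yB$ (and the analogous identities with $B\equiv0$ on the fibre when $p_0=\infty$), the single condition of Theorem~\ref{thm:holomorphie-courbure-homogene} for $\mathscr{H}(h\circ f_i)$ collapses to a polynomial of degree $2$ in $h(p_0)$ whose three coefficients are sums over the fibre involving only $B$ (resp.\ $A$) and its first derivatives, independent of $\alpha,\beta,\gamma,\delta$. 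Holomorphy for every $h$ is then exactly the vanishing of these three sums, which the paper checks by direct computation for each $f_i$ and each critical value ($p_0=1$ always has $\nu=2$ and is free; the fibres over $0$ and $\infty$ are computed explicitly for $f_2$ and $f_5$, with $f_3,f_4$ similar). Without this reduction, or a completed symmetry argument replacing it, the non-cyclic half of the theorem is not proved.

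In the cyclic case the paper simply invokes \cite[Proposition~4.1]{BM18Bull} (a homogeneous foliation whose $\Gunderline_{\mathcal H}$ has exactly two critical points has flat dual web iff it is conjugate to $\mathcal{H}_1^d$ or $\mathcal{H}_2^d$), whereas you re-derive it. Your outline is correct and your normalisation by the residual conjugations $z\mapsto\lambda z$, $z\mapsto\lambda/z$ is the right move, but you only compute one of the non-flat normal forms, namely $\ell(0)=0$, $\ell(\infty)\notin\{0,\infty\}$; the remaining configurations of $\big(\ell(0),\ell(\infty)\big)$ relative to $\{0,\infty\}$ --- for instance both images outside $\{0,\infty\}$, or $\ell(0)=\infty$ with $\ell(\infty)\neq0$ --- still require the check $\mathrm{d}\omega|_T\not\equiv0$ along the line of each non-fixed critical point. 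So this half is complete in spirit but not in execution; citing the earlier proposition, as the paper does, closes it immediately.
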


\noindent To prove this theorem, we need the following lemma.
\begin{lem}\label{lem:holomorphie-courbure-Leg-Hh}
{\sl Let $f:\mathbb{P}^{1}_{\mathbb{C}}\rightarrow\mathbb{P}^{1}_{\mathbb{C}}$ be a rational map of degree $d$ defined, for any $[x:y]\in\sph,$ by $$f([x:y])=[A(x,y):B(x,y)],\quad A,B\in\mathbb{C}[x,y]_d,\hspace{2mm }\gcd(A,B)=1.$$ Let $p_0\in\C\cup\{\infty\}$ be a critical value of $f$ and write $f^{-1}(p_0)=\{[a_1:b_1],\ldots,[a_n: b_n]\}.$ Suppose that the ramification indices of $f$ at the points $[a_i:b_i]$ are all equal to each other and let $\nu$ be their common value. For~$h\in \mathrm{Aut}(\mathbb{P}^{1}_{\C})$ denote by $\mathcal{H}_h=\mathscr{H}(h\circ f)$ the homogeneous~foliation~associated to the rational map $h\circ f.$ Let $(p,q)$ be the affine chart of $\pd$ corresponding to the line $\{y=px-q\}\subset{\mathbb{P}^{2}_{\mathbb{C}}}$ and let $D_h:=\{p=h(p_0)\}\subset \Delta(\Leg\mathcal{H}_h)$.
\begin{itemize}
\item [$\bullet$] If $\nu=2$ then the curvature of $\Leg\mathcal{H}_h$ is holomorphic on $D_h$ for all $h\in \mathrm{Aut}(\mathbb{P}^{1}_{\C}).$
\smallskip
\item [$\bullet$] If $\nu\geq3$ and $p_0\in\C$ then the curvature of  $\Leg\mathcal{H}_h$ is holomorphic on $D_h$ for all $h\in\mathrm{Aut}(\mathbb{P}^{1}_{\C})$ if~and~only~if
\begin{align}\label{equa:holomorphie-courbure-Dh-nu-3-fini}
\sum_{i=1}^{n}\frac{b_i\partial_{x}B(a_i,b_i)}{B(a_i,b_i)}=0,
&&
\sum_{i=1}^{n}\frac{b_i\partial_{y}B(a_i,b_i)-a_i\partial_{x}B(a_i,b_i)}{B(a_i,b_i)}=0,
&&
\sum_{i=1}^{n}\frac{a_i\partial_{y}B(a_i,b_i)}{B(a_i,b_i)}=0.
\end{align}
\item [$\bullet$] If $\nu\geq3$ and $p_0=\infty$ then the curvature of $\Leg\mathcal{H}_h$ is holomorphic on $D_h$ for all $h\in\mathrm{Aut}(\mathbb{P}^{1}_{\C})$ if~and~only~if
\begin{align}\label{equa:holomorphie-courbure-Dh-nu-3-infini}
\sum_{i=1}^{n}\frac{b_i\partial_{x}A(a_i,b_i)}{A(a_i,b_i)}=0,
&&
\sum_{i=1}^{n}\frac{b_i\partial_{y}A(a_i,b_i)-a_i\partial_{x}A(a_i,b_i)}{A(a_i,b_i)}=0,
&&
\sum_{i=1}^{n}\frac{a_i\partial_{y}A(a_i,b_i)}{A(a_i,b_i)}=0.
\end{align}
\end{itemize}
}
\end{lem}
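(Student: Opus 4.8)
The plan is to apply, for each $h\in\mathrm{Aut}(\sph)$, the criterion of Remark~\ref{rem:indice-ramification-homogene} to the foliation $\mathcal H_h$ along $D_h$, and then to read off the stated identities from the requirement that this holds for \emph{all} $h$. The basic observation is that $\Gunderline_{\mathcal H_h}=h\circ f$, so the fiber of $\Gunderline_{\mathcal H_h}$ over $h(p_0)$ is exactly $f^{-1}(p_0)=\{[a_1:b_1],\dots,[a_n:b_n]\}$, with the same common ramification index $\nu$ at each point. In particular, when $\nu=2$ the first assertion is immediate from item~\textbf{\textit{1}} of Remark~\ref{rem:indice-ramification-homogene}, uniformly in $h$.

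Assume now $\nu\geq3$ and $p_0\in\C$. First I would make the $h$-dependence explicit: writing $h$ as the M\"obius transformation of matrix $\left(\begin{smallmatrix}\alpha&\beta\\\gamma&\delta\end{smallmatrix}\right)$, one has $h\circ f=[\alpha A+\beta B:\gamma A+\delta B]$, so that $\mathcal H_h=\mathscr H(h\circ f)$ is defined by $\omega_h=A_h\,\mathrm dx+B_h\,\mathrm dy$ with
\[A_h(x,y)=\alpha A(y,x)+\beta B(y,x),\qquad B_h(x,y)=-\gamma A(y,x)-\delta B(y,x).\]
The key algebraic input comes from Lemma~\ref{lem:valeur-critique}, whose homogeneous form reads $A-p_0B=c\prod_{i=1}^{n}(b_ix-a_iy)^{\nu}$ for some $c\in\C^{*}$, since all ramification indices over $p_0$ equal $\nu$. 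As $\nu\geq2$, this form vanishes to order $\geq2$ at each $[a_i:b_i]$, which yields the crucial relations
\[A(a_i,b_i)=p_0\,B(a_i,b_i),\quad \partial_xA(a_i,b_i)=p_0\,\partial_xB(a_i,b_i),\quad \partial_yA(a_i,b_i)=p_0\,\partial_yB(a_i,b_i),\]
with $B(a_i,b_i)\neq0$ because $p_0$ is finite.

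Feeding these relations into item~\textbf{\textit{2}} of Remark~\ref{rem:indice-ramification-homogene}, applied to $\mathcal H_h$ along $D_h=\{p=h(p_0)\}$ (whose evaluations take place at $(x,y)=(b_i,a_i)$, hence at $(a_i,b_i)$ for $A,B$ after the variable swap), is the heart of the computation. Setting $s=\alpha p_0+\beta$ and $t=\gamma p_0+\delta$, so that $h(p_0)=s/t$, a direct simplification collapses the $i$-th summand of the criterion to
\[\frac{(s\,b_i-t\,a_i)\bigl(s\,\partial_xB(a_i,b_i)+t\,\partial_yB(a_i,b_i)\bigr)}{t^{2}\,B(a_i,b_i)},\]
so that holomorphy of $K(\Leg\mathcal H_h)$ along $D_h$ amounts to the vanishing of the binary quadratic form
\[s^{2}\sum_{i=1}^{n}\frac{b_i\,\partial_xB(a_i,b_i)}{B(a_i,b_i)}+st\sum_{i=1}^{n}\frac{b_i\,\partial_yB(a_i,b_i)-a_i\,\partial_xB(a_i,b_i)}{B(a_i,b_i)}-t^{2}\sum_{i=1}^{n}\frac{a_i\,\partial_yB(a_i,b_i)}{B(a_i,b_i)}.\]
As $h$ runs over $\mathrm{Aut}(\sph)$ the pair $(s,t)$ runs over all of $\C^{2}\setminus\{0\}$, so this form vanishes for every $h$ if and only if its three coefficients vanish, which is precisely~\eqref{equa:holomorphie-courbure-Dh-nu-3-fini}.

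For $\nu\geq3$ and $p_0=\infty$ I would avoid a separate computation by using the involution $\iota(w)=1/w$: replacing $f$ by $\iota\circ f=[B:A]$ turns the critical value $\infty$ into $0$ while keeping the same critical fiber and ramification, and since $h\circ f=(h\circ\iota)\circ(\iota\circ f)$ with $h\circ\iota$ again ranging over $\mathrm{Aut}(\sph)$ and $\{p=(h\circ\iota)(0)\}=\{p=h(\infty)\}=D_h$, the present problem for $(f,\infty)$ coincides with the one just solved for $(\iota\circ f,0)$. Rerunning the argument with the roles of $A$ and $B$ interchanged then produces exactly~\eqref{equa:holomorphie-courbure-Dh-nu-3-infini}. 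The main obstacle is the bookkeeping of the middle step: expressing $\omega_h$ correctly and checking, through the three gradient relations, that the a priori unwieldy expression of Remark~\ref{rem:indice-ramification-homogene} genuinely collapses to a binary quadratic form in $(s,t)$. Once that collapse is in hand, the conclusion is purely formal.
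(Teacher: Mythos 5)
Your argument is correct and follows essentially the same route as the paper: apply the criterion of Remark~\ref{rem:indice-ramification-homogene} to $\mathcal{H}_h$ along $D_h$, use the vanishing of the gradient of $A-p_0B$ at the critical points to collapse the criterion to a homogeneous quadratic form in the parameters of $h$, and conclude that vanishing for all $h$ is equivalent to the vanishing of its three coefficients. The only (cosmetic) difference is that the paper treats $p_0=\infty$ by a direct parallel computation using $B(a_i,b_i)=\partial_xB(a_i,b_i)=\partial_yB(a_i,b_i)=0$, whereas you reduce it to the finite case via the involution $w\mapsto 1/w$; both yield \eqref{equa:holomorphie-courbure-Dh-nu-3-infini}.
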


\begin{proof}
Let $h:\mathbb{P}^{1}_{\mathbb{C}}\rightarrow\mathbb{P}^{1}_{\mathbb{C}}$ be an automorphism of $\mathbb{P}^{1}_{\mathbb{C}}$; $h(z)=\dfrac{\alpha z+\beta}{\gamma z+\delta}$ where $\alpha,\beta,\gamma,\delta\in\C$ with $\alpha\delta-\beta\gamma\neq0.$ Then the foliation $\mathcal{H}_h$ is given by
\[
\omega_h=\big(\alpha\,A(y,x)+\beta\,B(y,x)\big)\mathrm{d}x-\big(\gamma\,A(y,x)+\delta\,B(y,x)\big)\mathrm{d}y.
\]
Moreover we have
\[
(h\circ f)^{-1}(h(p_0))=f^{-1}(p_0)=\{[a_1:b_1],\ldots,[a_n:b_n]\};
\]
since by hypothesis the ramification indices of $f$ at the points $[a_i:b_i]$ are all equal to each other and equal~to~$\nu$, the same is true for the ramification indices of $h\circ f $ at these points, because $h\in \mathrm{Aut}(\mathbb{P}^{1}_{\C}).$ According to Remark~\ref{rem:indice-ramification-homogene}, it follows that:
\begin{itemize}
\item [\textbf{\textit{i.}}] If $\nu=2$ then $K(\Leg\mathcal{H}_h)$ is holomorphic on $D_h$ for all $h\in \mathrm{Aut}(\mathbb{P}^{1}_{\C}).$
\smallskip
\item [\textbf{\textit{ii.}}] If $\nu\geq3$ then $K(\Leg\mathcal{H}_h)$ is holomorphic on  $D_h$ for all $h\in \mathrm{Aut}(\mathbb{P}^{1}_{\C})$~if~and~only~if
\begin{align}\label{equa:holomorphie-courbure-Dh-nu-3}
\sum_{i=1}^{n}\frac{\Big(h(p_0)b_i-a_i\Big)\Big(\alpha\partial_{x}A(a_i,b_i)+\beta\partial_{x}B(a_i,b_i)+\gamma\partial_{y}A(a_i,b_i)+\delta\partial_{y}B(a_i,b_i)\Big)}{\gamma\,A(a_i,b_i)+\delta\,B(a_i,b_i)}=0.
\end{align}
\item [\textbf{\textit{ii.1.}}] If $p_0\in\C$ then, from $f([a_i:b_i])=[p_0:1]$ and the fact that $[a_i:b_i]$ are critical points of  $f,$ we deduce the equalities
$A(a_i,b_i)=p_0B(a_i,b_i)$, $\partial_{x}A(a_i,b_i)=p_0\partial_{x}B(a_i,b_i)$ and $\partial_{y}A(a_i,b_i)=p_0\partial_{y}B(a_i,b_i),$ so that (\ref{equa:holomorphie-courbure-Dh-nu-3}) can be rewritten as
\begin{align*}
h(p_0)^2\sum_{i=1}^{n}\frac{b_i\partial_{x}B(a_i,b_i)}{B(a_i,b_i)}+h(p_0)\sum_{i=1}^{n}\frac{b_i\partial_{y}B(a_i,b_i)-a_i\partial_{x}B(a_i,b_i)}{B(a_i,b_i)}-\sum_{i=1}^{n}\frac{a_i\partial_{y}B(a_i,b_i)}{B(a_i,b_i)}=0.
\end{align*}
As a result, $K(\Leg\mathcal{H}_h)$ is holomorphic on $D_h$ for all $h\in \mathrm{Aut}(\mathbb{P}^{1}_{\C})$ if and only if the system~(\ref{equa:holomorphie-courbure-Dh-nu-3-fini}) is satisfied.
\smallskip

\item [\textbf{\textit{ii.2.}}] If $p_0=\infty$ then $B(a_i,b_i)=\partial_{x}B(a_i,b_i)=\partial_{y}B(a_i,b_i)=0$ and (\ref{equa:holomorphie-courbure-Dh-nu-3}) becomes
\begin{align*}
h(p_0)^2\sum_{i=1}^{n}\frac{b_i\partial_{x}A(a_i,b_i)}{A(a_i,b_i)}+h(p_0)\sum_{i=1}^{n}\frac{b_i\partial_{y}A(a_i,b_i)-a_i\partial_{x}A(a_i,b_i)}{A(a_i,b_i)}-\sum_{i=1}^{n}\frac{a_i\partial_{y}A(a_i,b_i)}{A(a_i,b_i)}=0.
\end{align*}
As a consequence, $K(\Leg\mathcal{H}_h)$ is holomorphic on $D_h$ for all $h\in \mathrm{Aut}(\mathbb{P}^{1}_{\C})$ if and only if the system~(\ref{equa:holomorphie-courbure-Dh-nu-3-infini}) is satisfied.
\end{itemize}
Hence the lemma is proved.
\end{proof}

\begin{proof}[\sl Proof of Theorem~\ref{thm:feuill-homog-Galois-plat}]
\textbf{\textit{i.}} Suppose that $\mathrm{Gal}(\Gunderline_{\mathcal H})$ is cyclic. Then, by Theorem~\ref{thm:Klein}, $\Gunderline_{\mathcal H}$ is left-right conjugate to $f_1=z^d$. Since $f_1$ has exactly two critical points (namely $0$ and $\infty$), the same is true for $\Gunderline_{\mathcal H}.$ This~implies, according to \cite[Proposition ~4.1]{BM18Bull}, that the $d$-web $\Leg\mathcal{H}$ is flat if~and~only~if $\mathcal{H}$ is linearly conjugate to one of the two foliations $ \mathcal{H}_{1}^{d}$, $\mathcal{H}_{2}^{d}.$
\vspace{2mm}

\textbf{\textit{ii.}} Suppose that $\mathrm{Gal}(\Gunderline_{\mathcal H})$ is non-cyclic. According to Theorem~\ref{thm:Klein}, there exist $i\in\{2,\ldots,5\}$ and $\ell,\rho\in\mathrm{Aut}(\mathbb {P}^{1}_{\mathbb{C}})$ such that $\Gunderline_{\mathcal H}=\ell\circ f_i\circ\rho$ and therefore $\mathcal{H}=\mathscr{H}(\ell\circ f_i\circ\rho).$ In particular, there exist $i\in\{2,\ldots,5\}$ and~$h\in\mathrm{Aut}(\mathbb {P}^{1}_{\mathbb{C}})$ such that $\mathcal{H}$ is linearly conjugate to the foliation $\mathcal{H}_{h}^{(i)}:=\mathscr{H}(h\circ f_i)$; indeed, it suffices to take $h=\rho\circ\ell$, because $h\circ f_i=\rho\circ(\ell\circ f_i\circ\rho)\circ\rho^{-1}. $ To show that the $d$-web $\Leg\mathcal{H}$ is flat, it suffices therefore to show that for all $i\in\{2,\ldots,5\}$ and~all~$ h\in\mathrm{Aut}(\mathbb{P}^{1}_{\mathbb{C}}),$ the $d$-web $\Leg\mathcal{H}_{h}^{( i)}$ is flat. Now, for all $i\in\{2,\ldots,5\}$, the map $f_i$ being a ramified \textsc{Galois} covering of $\sph$ by itself, \cite[Theorem~A]{BFMN16} implies that the ramification indices of $f_i$ at the points of the same fiber $f_{i}^{-1}(p_0)$ have the same value, which we will denote by $\nu(f_i,p_0).$ Thanks to~\cite[Theorem~3.1]{BM18Bull}, it suffices again to apply Lemma~\ref{lem:holomorphie-courbure-Leg-Hh} to each of the $f_i$ and to show that for every critical value $p_0\in\sph$ of $f_i,$ the curvature of $\Leg\mathcal{H}_{h}^{(i)}$ is holomorphic on the component $D_{h}^{(i)} (p_0):=\{p=h(p_0)\}$ of~$\Delta(\Leg\mathcal{H}_{h}^{(i)})$ for all $h\in \mathrm{Aut}(\mathbb{P}^{1}_{\C}).$
\vspace{2mm}

\noindent First of all, a straightforward computation shows that each of the $f_i,i=2,\ldots,5,$ has as critical values $0$, $1$ and~$\infty.$
\vspace{2mm}

\noindent The~case~of~the~critical~value $p_0=1$ is immediate. Indeed, it is easy to verify that for all $i\in\{2,\ldots,5\},$ $\nu(f_i,1)=2,$ so that the curvature of $\Leg\mathcal{H}_{h}^{(i)}$ is holomorphic on $D_{h}^{(i)}(1)$ for all $i\in\{2,\ldots,5\}$ and all $h\in \mathrm{Aut}(\mathbb{P}^{1}_{\C})$ (Lemma~\ref{lem:holomorphie-courbure-Leg-Hh}).
\vspace{2mm}

\noindent The case where $i=2$ and $p_0=0$ is also immediate. Indeed, we have $\nu(f_2,0)=2,$ which implies that $K(\Leg\mathcal{H}_{h}^{(2)})$ is holomorphic on $D_{h }^{(2)}(0)$ for all $h\in\mathrm{Aut}(\mathbb{P}^{1}_{\C}).$
\vspace{2mm}

\noindent Let us consider the case where $i=2$ and $p_0=\infty.$ The map $f_2$ is defined in homogeneous coordinates by
\[
\hspace{1.5cm}f_2\hspace{1mm}\colon[x:y]\mapsto [A_2(x,y):B_2(x,y)],\quad\text{where } A_2(x,y)=(x^k+y^k)^2\hspace{1mm} \text{and } B_2(x,y)=4x^ky^k.
\]
Moreover, the fiber $f_{2}^{-1}(\infty)$ consists of the two points $0=[0:1]$ and $\infty=[1:0]$; in particular $\nu(f_2,\infty)=k.$ If~$k=2$~then $K(\Leg\mathcal{H}_{h}^{(2)})$ is holomorphic on $ D_{h}^{(2)}(\infty)$ for all $h\in\mathrm{Aut}(\mathbb{P}^{1}_{\C}).$ Suppose $k\geq3. $ We have
\begin{tiny}
\begin{align*}
&\sum_{[a:b]\in f_{2}^{-1}(\infty)}\frac{b\partial_{x}A_{2}(a,b)}{A_{2}(a,b)}=\frac{\partial_{x}A_{2}(0,1)}{A_{2}(0,1)}=0,&&
\sum_{[a:b]\in f_{2}^{-1}(\infty)}\frac{b\partial_{y}A_{2}(a,b)-a\partial_{x}A_{2}(a,b)}{A_{2}(a,b)}=
\frac{\partial_{y}A_{2}(0,1)}{A_{2}(0,1)}-\frac{\partial_{x}A_{2}(1,0)}{A_{2}(1,0)}=0,&&
\sum_{[a:b]\in f_{2}^{-1}(\infty)}\frac{a\partial_{y}A_{2}(a,b)}{A_{2}(a,b)}=\frac{\partial_{y}A_{2}(1,0)}{A_{2}(1,0)}=0\hspace{0.5mm};
\end{align*}
\end{tiny}
\hspace{-0.9mm}it follows, by Lemma~\ref{lem:holomorphie-courbure-Leg-Hh}, that $K(\Leg\mathcal{H}_{h}^{(2)})$ is holomorphic on $D_{h}^{(2)}(\infty)$ for all $h\in\mathrm{Aut}(\mathbb{P}^{1}_{\C}).$
\vspace{2mm}

\noindent Let us study the case where $i=5$ and $p_0=0.$ Consider the polynomials
\begin{align*}
P(w)=w^4-228w^3+494w^2+228w+1
\qquad\text{and}\qquad
Q(w)=-\sqrt[5]{1728}(w^2+11w-1)\hspace{0.5mm};
\end{align*}
the map $f_5$ is given, for any $[x:y]\in\sph$, by  $f_5([x:y])=[A_5(x,y):B_5(x,y)],$ where
\begin{small}
\begin{align*}
A_5(x,y)=\left(y^{20}P\left(\frac{x^5}{y^5}\right)\right)^3
\qquad\text{and}\qquad
B_5(x,y)=\left(xy^{11}Q\left(\frac{x^5}{y^5}\right)\right)^5.
\end{align*}
\end{small}
\hspace{-1mm}The polynomial $P(w)$ has as roots the real numbers
\begin{tiny}
\begin{align*}
w_1=57-25\sqrt{5}+5\sqrt{255-114\sqrt{5}},&&
w_2=57-25\sqrt{5}-5\sqrt{255-114\sqrt{5}},&&
w_3=57+25\sqrt{5}+5\sqrt{255+114\sqrt{5}},&&
w_4=57+25\sqrt{5}-5\sqrt{255+114\sqrt{5}}\hspace{0.5mm};
\end{align*}
\end{tiny}
\hspace{-1mm}by setting $\zeta=\exp(\frac{2\mathrm{i}\pi}{5})$ and $u_j=\sqrt[5]{w_j}\in\R$, $j=1,\ldots,4,$ we have
\[
f_{5}^{-1}(0)=\Big\{[\zeta^{l}u_j:1]\hspace{1.5mm}\big\vert\hspace{1.5mm} j=1,\ldots,4,\hspace{1mm}l=0,\ldots,4\Big\}.
\]
In particular, $f_{5}^{-1}(0)$ has cardinality $20$ and therefore $\nu(f_5,0)=60/20=3.$ Furthermore, by a straightforward computation, we obtain the following equalities
\begin{Small}
\begin{align*}
&
\frac{b\partial_{x}B_{5}(a,b)}{B_{5}(a,b)}\Big|_{(a,b)=(\zeta^{l}u_j,1)}
=5\zeta^{5-l}\left(\frac{1}{u_{j}}+\frac{5w_{j}Q\hspace{0.2mm}'(w_{j})}{u_{j}Q(w_{j})}\right),
&&
\frac{a\partial_{y}B_{5}(a,b)}{B_{5}(a,b)}\Big|_{(a,b)=(\zeta^{l}u_j,1)}
=5\zeta^{l}u_{j}\left(11-\frac{5w_{j}Q\hspace{0.2mm}'(w_{j})}{Q(w_{j})}\right),\\
&
\frac{b\partial_{y}B_{5}(a,b)-a\partial_{x}B_{5}(a,b)}{B_{5}(a,b)}\Big|_{(a,b)=(\zeta^{l}u_j,1)}
=g(w_j),
\end{align*}
\end{Small}
\hspace{-1mm}where $g\hspace{1mm}\colon x\mapsto -\frac{50(x^2+1)}{x^2+11x-1},$ so that
\begin{Small}
\begin{align*}
\sum_{j=1}^{4}\sum_{l=0}^{4}\frac{b\partial_{x}B_{5}(a,b)}{B_{5}(a,b)}\Big|_{(a,b)=(\zeta^{l}u_j,1)}=0,&&
\sum_{j=1}^{4}\sum_{l=0}^{4}\frac{b\partial_{y}B_{5}(a,b)-a\partial_{x}B_{5}(a,b)}{B_{5}(a,b)}\Big|_{(a,b)=(\zeta^{l}u_j,1)}=0,&&
\sum_{j=1}^{4}\sum_{l=0}^{4}\frac{a\partial_{y}B_{5}(a,b)}{B_{5}(a,b)}\Big|_{(a,b)=(\zeta^{l}u_j,1)}=0,
\end{align*}
\end{Small}
\hspace{-1mm}because $\sum_{l=0}^{4}\zeta^{l}=\sum_{l=0}^{4}\zeta^{5-l}=0$\, and \,$\sum_{j=1}^{4}g(w_j)=0.$ Thus, we deduce from Lemma~\ref{lem:holomorphie-courbure-Leg-Hh} that $K(\Leg\mathcal{H}_{h}^{(5)})$ is holomorphic on~$D_{h}^{(5)}(0)$ for all $h\in\mathrm{Aut}(\mathbb{P}^{1}_{\C})$.
\vspace{2mm}

\noindent Let us examine the case where $i=5$ and $p_0=\infty.$ Set $\widetilde{w}_1=\frac{-11+5\sqrt{5}}{2}$,\, $\widetilde{ w}_2=\frac{-11-5\sqrt{5}}{2}$,\, $\widetilde{u}_1=\frac{-1+\sqrt{5}}{2}$\, and \,$\widetilde{u}_2=\frac{-1-\sqrt{5}}{2}$ (the~$\widetilde{w}_j$~are the two roots of $Q(w)$ and $\widetilde{u}_j=\sqrt[5]{\widetilde{w}_j}$\hspace{0.2mm}). Then
\[
f_{5}^{-1}(\infty)=\Big\{[0:1],\,[1:0],\,[\zeta^{l}\widetilde{u}_j:1]\hspace{1.5mm}\big\vert\hspace{1.5mm} j=1,2,\hspace{1mm}l=0,\ldots,4\Big\}\hspace{0.5mm};
\]
in particular, $\#\hspace{0.1mm}f_{5}^{-1}(\infty)=12$ and consequently $\nu(f_5,\infty)=60/12=5.$ Moreover, a straightforward computation leads to
\begin{tiny}
\begin{align*}
&
\frac{b\partial_{x}A_{5}(a,b)}{A_{5}(a,b)}\Big|_{(a,b)=(0,1)}=0,
&&
\frac{a\partial_{y}A_{5}(a,b)}{A_{5}(a,b)}\Big|_{(a,b)=(0,1)}
=0,
&&
\frac{b\partial_{y}A_{5}(a,b)-a\partial_{x}A_{5}(a,b)}{A_{5}(a,b)}\Big|_{(a,b)=(0,1)}
=60,\\
&
\frac{b\partial_{x}A_{5}(a,b)}{A_{5}(a,b)}\Big|_{(a,b)=(1,0)}
=0,
&&
\frac{a\partial_{y}A_{5}(a,b)}{A_{5}(a,b)}\Big|_{(a,b)=(1,0)}
=0,
&&
\frac{b\partial_{y}A_{5}(a,b)-a\partial_{x}A_{5}(a,b)}{A_{5}(a,b)}\Big|_{(a,b)=(1,0)}
=-60,\\
&
\frac{b\partial_{x}A_{5}(a,b)}{A_{5}(a,b)}\Big|_{(a,b)=(\zeta^{l}\widetilde{u}_j,1)}
=\frac{15\zeta^{5-l}\widetilde{w}_{j}P\hspace{0.2mm}'(\widetilde{w}_{j})}{\widetilde{u}_{j}P(\widetilde{w}_{j})},
&&
\frac{a\partial_{y}A_{5}(a,b)}{A_{5}(a,b)}\Big|_{(a,b)=(\zeta^{l}\widetilde{u}_j,1)}
=15\zeta^{l}\widetilde{u}_j\left(4-\frac{\widetilde{w}_{j}P\hspace{0.2mm}'(\widetilde{w}_{j})}{P(\widetilde{w}_{j})}\right),
&&
\frac{b\partial_{y}A_{5}(a,b)-a\partial_{x}A_{5}(a,b)}{A_{5}(a,b)}\Big|_{(a,b)=(\zeta^{l}\widetilde{u}_j,1)}
=\widetilde{g}(\widetilde{w}_{j}),
\end{align*}
\end{tiny}
\hspace{-1mm}where $\widetilde{g}\hspace{1mm}\colon x\mapsto -\frac{60(x^4-114\,x^3-114\,x-1)}{x^4-228\,x^3+494\,x^2+228\,x+1}.$ Therefore, we have
\begin{Small}
\begin{align*}
&\sum_{[a:b]\in f_{5}^{-1}(\infty)}\frac{b\partial_{x}A_{5}(a,b)}{A_{5}(a,b)}
=\sum_{j=1}^{2}\frac{15\widetilde{w}_{j}P\hspace{0.2mm}'(\widetilde{w}_{j})}{\widetilde{u}_{j}P(\widetilde{w}_{j})}\sum_{l=0}^{4}\zeta^{5-l}
=0,&&
\sum_{[a:b]\in f_{5}^{-1}(\infty)}\frac{a\partial_{y}A_{5}(a,b)}{A_{5}(a,b)}
=\sum_{j=1}^{2}15\widetilde{u}_j\left(4-\frac{\widetilde{w}_{j}P\hspace{0.2mm}'(\widetilde{w}_{j})}{P(\widetilde{w}_{j})}\right)\sum_{l=0}^{4}\zeta^{l}=0,
\\
&\sum_{[a:b]\in f_{5}^{-1}(\infty)}
\frac{b\partial_{y}A_{5}(a,b)-a\partial_{x}A_{5}(a,b)}{A_{5}(a,b)}=5\sum_{j=1}^{2}\widetilde{g}(\widetilde{w}_j)=0.
\end{align*}
\end{Small}
\hspace{-1mm}According to Lemma~\ref{lem:holomorphie-courbure-Leg-Hh}, it follows that $K(\Leg\mathcal{H}_{h}^{(5)})$ is holomorphic on~$D_{h}^{(5)}(\infty)$ for all $h\in\mathrm{Aut}(\mathbb{P}^{1}_{\C})$.
\vspace{1.5mm}

\noindent The remaining cases (those where $i\in\{3,4\}$ and $p_0\in\{0,\infty\}$) are treated similarly.
\end{proof}

\begin{rem}
For $d\geq3,$ denote by $\mathbf{FP}(d)$ the algebraic set consisting of foliations of degree $d$ on $\pp$ with a flat \textsc{Legendre} transform. In~\cite[Theorem~D]{BM21Four}, we showed that $\mathbf{FP}(3)$ has exactly twelve irreducible components, each of them is rigid in the sense that it is the closure of the orbit under the action of $\mathrm{Aut}(\pp)$ of a foliation on $\pp.$ Theorem~\ref{thm:feuill-homog-Galois-plat} shows that in any even degree $d$ the algebraic set $\mathbf{FP}(d)$ always contains non-rigid irreducible components.
\end{rem}


\end{document}